\documentclass[10pt]{amsart}

\usepackage{amsfonts}
\usepackage{amssymb}
\usepackage{graphicx}
\usepackage{amsmath, amsthm, latexsym, bm, bbm, mathrsfs}
\usepackage[all]{xy}
\usepackage{hyperref}
\usepackage[margin=1.2in]{geometry}
\usepackage{color}
\usepackage{subfig}
\usepackage{float}

\def \C{\mathbb{C}}
\def \Z{\mathbb{Z}}
\def \R{\mathbb{R}}

\def \P{\mathbb{P}}

\def \h{\textbf{h}}
\def \O{\mathcal{O}}

\def \Spec{\operatorname{Spec}}

\def \Span{\operatorname{span}}
\def \interior{\operatorname{Int}}
\def \symmetry{\operatorname{Sym}}
\def \PL{\operatorname{PL}}

\def \codim{\operatorname{codim}}

\def \V{\mathcal{V}}
\def \E{\mathcal{E}}

\def \u{\bm{u}}
\def \L{\mathcal{L}}

\newcommand*\abs[1]{\left\lvert#1\right\rvert}
\newcommand*\dotpro[2]{\langle{#1} , {#2}\rangle}

\theoremstyle{plain}
\newtheorem{theorem}{Theorem}[section]
\newtheorem{lemma}[theorem]{Lemma}
\newtheorem{prop}[theorem]{Proposition}
\newtheorem{corollary}[theorem]{Corollary}

\theoremstyle{definition}
\newtheorem{example}[theorem]{Example}
\newtheorem{definition}[theorem]{Definition}
\newtheorem{remark}[theorem]{Remark}

\begin{document}

\title{Note on Euler characteristic of a toric vector bundle}
\author{Suhyon Chong}
\address{Department of Mathematics, University of Pittsburgh,
Pittsburgh, PA, USA.}
\email{suc86@pitt.edu}

\author{Shaoyu Huang}
\address{Department of Mathematics, University of Pittsburgh,
Pittsburgh, PA, USA.}
\email{shh123@pitt.edu}

\author{Kiumars Kaveh}
\address{Department of Mathematics, University of Pittsburgh, Pittsburgh, PA, USA.}
\email{kaveh@pitt.edu}

\date{\today}

\begin{abstract}
A convex chain is a finite integer linear combination of indicator functions of convex polytopes. Khovanskii-Pukhlikov extend the Ehrhart theory of convex lattice polytopes to the setting of convex chains. Extending the relationship between equivariant line bundles on projective toric varieties and virtual lattice polytopes, we associate a lattice convex chain to a torus equivariant vector bundle on a toric variety and show that sum of values of this convex chain on lattice points gives the Euler characteristic of the bundle.  
\end{abstract}

\maketitle
\tableofcontents

\section{Introduction}
The purpose of this short note is to illustrate that the Khovanskii-Pukhlikov theory of convex chains and multi-valued support functions (\cite{Khovanskii-Pukhlikov-1, Khovanskii-Pukhlikov-2}) is the right gadget to extend the Ehrhart theory of toric line bundles, which concerns convex lattice polytopes, to toric vector bundles. We work over $\C$ but all the results hold over any algebraically closed field of characteristic $0$. 

We recall that a \emph{toric vector bundle}, is a vector bundle on a toric variety equipped with a linear torus action lifting the one on the base toric varity.  

One of the central facts in the theory of toric varieties is that the dimension of the space of global sections of an ample toric line bundle is the number of lattice points in the corresponding Newton polytope (also called the momentum polytope). We use the Khovanskii-Pukhlikov theory of convex chains to extend this well-known fact to toric vector bundles.

Let $M \cong \Z^n$ be a lattice and let $M_{\R}=M\otimes_{\Z}\R \cong \R^n$. Let $\mathscr{P}_M$ be the set of all convex lattice polytopes in $M_{\R}$. We think of $M$ as the lattice of characters of a torus $T \cong (\C^*)^n$.

Let $P \in \mathscr{P}_M$ be a full dimensional lattice polytope. The Ehrhart function $L_P$ of $P$ is defined by $L_P(t) = |tP \cap \Z^n|$, for positive integers $t > 0$. A well-known theorem of Ehrhart states that $L_P$ is the restriction of a polynomial to positive integers (see Section \ref{subsec-Ehrhart}). The Ehrhart function has an interpretation in terms of algebro-geometry of toric varieties: Let $\Sigma$ be the normal fan of $P$, the polytope $P$ determines a (torus equivariant) ample line bundle $\mathscr{L}_P$ on the toric variety $X_\Sigma$ such that, for any integer $t>0$, the dimension of space of global sections $H^0(X_\Sigma, \mathscr{L}_P^{\otimes t})$ is equal to $L_P(t)$.

In \cite{Khovanskii-Pukhlikov-1}, Khovanskii-Pukhlikov extend the Ehrhart theory of lattice convex polytopes to lattice convex chains. A \emph{convex chain} is a function $\alpha:M_{\R}\to \Z$ of the form $\alpha=\sum_i n_i\mathbbm{1}_{P_i}$, where $n_i\in\Z$ and $P_i\in\mathscr{P}_{M_\R}$ are convex polytopes. Here $\mathbbm{1}_P$ denotes the indicator function of a polytope $P$. If all the $P_i$ are lattice polytopes, then we call $\alpha$ a \emph{lattice convex chain}.
One of the main results in \cite{Khovanskii-Pukhlikov-1} is that the function $\alpha \mapsto \Sigma_{u \in M} \alpha(u)$ behaves in a polynomial fashion for lattice convex chains. This far extends the polynomiality result for the Ehrhart function of a lattice polytope. The algebra of convex chains is closely related to McMullen's polytope algebra (see \cite{McMullen, Brion}).


For a toric vector bundle $\E$ on a $T$-toric variety $X_\Sigma$ and a character $u \in M$, we define the equivariant Euler characteristic of $\E$ at $u$, denoted by $\chi(\E)_u = \chi(X_\Sigma, \E)_u$, to be the Euler characteristic of the sheaf of $u$-weight sections of $\E$. Clearly, $\chi(\E) = \sum_{u \in M} \chi(\E)_u$.

Extending the relationship between equivariant line bundles on projective toric varieties and virtual lattice polytopes, we associate a lattice convex chain $\alpha_\E$ to a toric vector bundle $\E$ and show that the values of the convex chain $\alpha_\E$ give the equivariant Euler characteristic of the bundle (Section \ref{sec:cc-main}). The idea of associating a convex chain to a toric vector bundle is due to Askold Khovanskii.

\begin{theorem}
Let $\E$ be a toric vector bundle on a projective toric variety $X_\Sigma$. The equivariant Euler characteristic of $\E$ is given by
$$\chi(\E)_u = \alpha_\E(u), ~~~~~\forall u \in M.$$
In particular,
$$\chi(\E) = \sum_{u\in M}\alpha_\E(u).$$    
\end{theorem}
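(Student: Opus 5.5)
We do not yet have the definition of $\alpha_\E$, so we fix the natural one via Klyachko's description: $\E$ corresponds to a vector space $E$ of dimension $r$ with decreasing filtrations $\{E^{\rho}(i)\}_{i\in\Z}$, one for each ray $\rho\in\Sigma(1)$. These filtrations are compatible: for each cone $\sigma$ there is a multiset of characters $u(\sigma)=\{u_1^\sigma,\dots,u_r^\sigma\}$ and a decomposition $E=\bigoplus_j L_j^\sigma$ into lines diagonalizing the filtrations of the rays of $\sigma$. We expect $\alpha_\E$ to be the convex chain assembled from the piecewise linear data, in the way the virtual polytope of a line bundle is assembled from its support function. Concretely, for each $u\in M$ and each subspace $W\subseteq E$ arising as an intersection $E_u^{\rho_1}\cap\cdots$ of pieces $E^{\rho}(\langle u,v_\rho\rangle)$, we get a "polytope" of $u$'s for which a given intersection is the relevant one. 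The plan is to reduce the theorem to the weight-by-weight computation of sheaf cohomology via the Čech complex.

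\textbf{Step 1 (weight decomposition).} Use the $T$-invariant affine cover $\{U_\sigma\}_{\sigma\in\Sigma(n)}$. By Klyachko, $H^0(U_\sigma,\E)_u=\bigcap_{\rho\in\sigma(1)}E^\rho(\langle u,v_\rho\rangle)$. Consequently the $u$-weight part of the Čech complex is the complex
\[
C^\bullet_u:\ \bigoplus_{\sigma_0}\bigcap_{\rho\in\sigma_0}E^\rho(\langle u,v_\rho\rangle)\to\bigoplus_{\sigma_0<\sigma_1}\bigcap_{\rho\in\sigma_0\cap\sigma_1}E^\rho(\langle u,v_\rho\rangle)\to\cdots .
\]
Hence $\chi(\E)_u=\sum_k(-1)^k\dim C^k_u$ is an alternating sum of dimensions of intersections of filtration pieces.

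\textbf{Step 2 (convex chain structure).} For a fixed cone $\tau$, the function $u\mapsto\dim\bigcap_{\rho\in\tau(1)}E^\rho(\langle u,v_\rho\rangle)$ is a finite sum of indicator functions. Writing the dimension as $\sum_{i}\mathbbm 1[\dim\ge i]$ shows it is a combination of indicators of polyhedral regions cut out by inequalities $\langle u,v_\rho\rangle\le c$. For a maximal cone, compatibility gives exactly $\sum_j\mathbbm 1_{\{u:\langle u-u_j^\sigma,v_\rho\rangle\le 0,\ \rho\in\sigma\}}$, a sum of translated dual cones. Summing with signs over the Čech nerve, the unbounded cone pieces cancel, just as in Brion's formula and in the line bundle case. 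What remains is a finite integer combination of indicator functions of lattice polytopes, and this is the chain $\alpha_\E$. We will check that this matches the paper's definition, e.g.\ via the Khovanskii–Pukhlikov correspondence between multi-valued support functions and convex chains. There the support function of $\E$ on $\sigma$ is the multi-valued function $\{u_j^\sigma\}$.

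\textbf{Step 3 (identification).} We then verify $\sum_k(-1)^k\dim C^k_u=\alpha_\E(u)$ pointwise. The cleanest route is to split $E$ using a splitting that is valid on each maximal cone. Compatibility makes each intersection a direct sum of lines, so the alternating count becomes an Euler characteristic of a subcomplex of the nerve, an indicator computation as for line bundles. The line bundle case, $\chi(\mathscr L)_u=$ value of the virtual polytope's chain at $u$, is standard via the same Čech computation. Summing over $u$ then gives $\chi(\E)=\sum_u\alpha_\E(u)$, a finite sum since $\alpha_\E$ has bounded support.

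\textbf{Main obstacle.} Step 3 is the hard part: lines diagonalizing different cones are not globally compatible, so $\E$ need not split. I would first try to replace the vector-space dimension count with an additive (Euler-characteristic/valuative) argument in the lattice of subspaces generated by the filtrations. The aim is to show the alternating sum depends only on the multisets $u(\sigma)$ and the inclusion–exclusion pattern, where distributivity fails only in ways that cancel. If that fails, I would instead deform $\E$ equivariantly to a split bundle (filtrations in general position, or a degeneration in the space of filtrations) and use constancy of $\chi$ in flat families. The main check there is that $\alpha_\E$ is also unchanged, since it depends only on the $u(\sigma)$.
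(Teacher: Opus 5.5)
\textbf{There is a genuine gap: the proposal never connects the \v{C}ech count to the paper's $\alpha_\E$.}

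Your Steps 1 and 2 are correct, and they already remove what you call the main obstacle. Each \v{C}ech term is a sum of spaces $\bigcap_{\rho\in\tau(1)}E^\rho(\dotpro{u}{v_\rho})$ for a \emph{single} cone $\tau=\sigma_0\cap\cdots\cap\sigma_k$. Compatibility holds on every cone: restrict the splitting of any maximal cone containing $\tau$. So each such space has dimension $\sum_j\mathbbm{1}_{u^\tau_j-\tau^\vee}(u)$. No global splitting is needed, and distributivity never fails in this count. The upshot is that $\chi(\E)_u$ depends only on the local multisets $\bm{u}_\E(\tau)$.

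The problem is the identification with $\alpha_\E$. The paper defines $\alpha_\E$ as the Khovanskii--Pukhlikov chain of $\h_\E=[h_1]+\cdots+[h_r]$, where $h_i(x)$ is the $i$-th smallest of the numbers $\dotpro{u^\sigma_j}{x}$. Your ``we will check that this matches the paper's definition'' is the whole content of the theorem; declaring the \v{C}ech sum to be $\alpha_\E$ makes the statement a tautology. The obstruction is that the sorted functions $h_i$ are in general not linear on the cones of $\Sigma$. For $T_{\P^2}$, $h_1$ breaks along $x_2=0$ inside $\sigma_1$. So the line bundle formula over $\Sigma$ cannot be applied to the $h_i$ one at a time.

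Your fallbacks do not close this gap. The subspace-lattice argument targets a non-issue, as explained above. Deforming $\E$ on $X_\Sigma$ to a split bundle with the same $\bm{u}(\sigma)$ is impossible in general. These multisets are discrete invariants (the $T$-weights of the fibres at the fixed points), and no split bundle on $X_\Sigma$ need realize them. For $T_{\P^2}$, propagating the forced line-bundle data from $\sigma_3$ across $\rho_1$ and $\rho_2$ gives two different values at $\bm{v}_3$.

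\textbf{The missing ingredient is to refine the fan; this is the paper's route.}
\begin{enumerate}
\item Take a refinement $\Sigma'$ on whose cones every $h_i$ is linear, and let $f:X_{\Sigma'}\to X_\Sigma$ be the toric blow-up.
\item On each cone $\tau'$ of $\Sigma'$, the multiset of $f^*\E$ is $\{h_1|_{\tau'},\dots,h_r|_{\tau'}\}$. This is exactly the multiset of $\L_{h_1}\oplus\cdots\oplus\L_{h_r}$; the paper phrases this as equality of equivariant Chern classes.
\item Your \v{C}ech count then gives an elementary proof that $\chi(f^*\E)_u=\sum_i\chi(\L_{h_i})_u$.
\item This equals $\sum_i\alpha_{\L_{h_i}}(u)=\alpha_\E(u)$, by Proposition~\ref{th-Euler-char-lb} and additivity of the Khovanskii--Pukhlikov correspondence.
\item Finally you need $\chi(\E)_u=\chi(f^*\E)_u$. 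The paper obtains this in Lemma~\ref{lem-pull-back-inv} from $Rf_*\O_{X_{\Sigma'}}=\O_{X_\Sigma}$ and the projection formula. A purely combinatorial substitute would be invariance of your \v{C}ech alternating sum under subdivision, which is not automatic.
\end{enumerate}
Neither the refinement nor this pull-back invariance appears in your plan, and without them Step 3 has no mechanism.
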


In Section \ref{sec:example}, we explicitly compute some examples.

\begin{remark}
In the forthcoming paper \cite{Chong-Kaveh}, among other things, the results of the present note are generalized to \emph{tropical vector bundles}. Also the convex chain associated to a tropical vector bundle makes an appearance in \cite[Section 6]{KM-trop-vbs}.
\end{remark}

\begin{remark}
We point out that, in general, the convex chain $\alpha_\E$ contains different information than a parliament of polytopes of $\E$ introduced in \cite{DJS}. The convex chain $\alpha_\E$ encodes the equivariant Euler characteristic of $\E$ while the parliament of polytopes encodes the space of global sections of $\E$. 
\end{remark}

\begin{remark}
One can give a slightly different construction of the convex chain $\alpha_\E$ as follows. Given a smooth projective toric variety $X_\Sigma$, the equivariant $K$-ring of $X_\Sigma$ can be identified with a subring of the ring of convex chains (see \cite[Theorem A.10]{EHL}). Then for a toric vector bundle $\E$ on $X_\Sigma$, one writes its equivariant $K$-class as a sum/difference of toric line bundles. It can be shown (but it is not obvious) that this construction gives the same convex chain as $\alpha_\E$.

In fact, in the paper \cite[Section 4]{Chong-Kaveh}, the equivariant $K$-class of a tropical vector bundle is represented as an alternating sum of equivariant $K$-classes of toric line bundles. This in particular applies to toric vector bundles.


\end{remark}

\subsection*{Acknowledgment}
We would like to thank Askold Khovanskii for suggesting the association of a multi-valued support function and hence a convex chain to a toric vector bundle. We also thank Amin Gholampour, Klaus Altmann and Matt Larson for useful email correspondence. In particular, the proof of Lemma \ref{lem-pull-back-inv} was suggested to us by Gholampour and Altmann. The third author is partially supported by National Science Foundation Grant DMS-210184 and a Simons Collaboration Grant.

\subsection*{Notation}
\begin{itemize}
\item $T=(\C^*)^n$ denotes an algebraic torus of dimension $n$ with $M$ and $N$ its character and cocharacter lattices respectively. The pairing between them is denoted by $\dotpro{\cdot}{\cdot}:M \times N \rightarrow \Z$. We let $M_{\R}=M\otimes\R$ and  $N_{\R}=N\otimes\R$ to be the corresponding $\R$-vector spaces.
\item $X_\Sigma$ is a $T$-toric variety corresponding to a fan $\Sigma$ in $N_{\R}$. The support of $\Sigma$ is denoted by $\abs{\Sigma}$. In this paper, we often assume that $X_\Sigma$ is projective.
\item $U_\sigma$ is an affine $T$-toric variety corresponding to a cone $\sigma\in\Sigma$. $\sigma^\vee$ is the dual cone of $\sigma$.
\item $P$ usually denotes a lattice convex polytope in $M_\R$. The $h_P: N_\R \to \R$ is the support function of $P$, and $L_P: \Z \to \Z$ is its Ehrhart function/polynomial.
\item $\E$ is a toric vector bundle on a toric variety $X_\Sigma$.
\item $\h_\E$ is the multi-valued support function associated to a toric vector bundle $\E$ (Section \ref{subsec-cc-tvb}).
\item $\alpha_\E$ is the convex chain corresponding to a toric vector bundle $\E$ (Section \ref{subsec-cc-tvb}).
\item $\PL(N, \Z)$ denotes the semifield of all piecewise linear functions, with respect to some complete fan, on the vector space $N_\R$ which have integer values on the lattice $N$. Moreover, we add a unique ``infinity element'' $\infty$ to $\PL(N, \Z)$ which is greater than any other element.
\end{itemize}

\section{Background}
\subsection{Ehrhart theory}  \label{subsec-Ehrhart}
In this section we review well-known results about the Ehrhart polynomial of a lattice polytope and relation with line bundles on toric varieties. 

Let $P$ be a $d$-dimensional lattice polytope in ${\R}^n$, that it, its vertices are points of the lattice ${\Z}^n$. For any positive integer $t$, let $tP$ be the $t$-times dilation of $P$, and let 
\begin{equation*}
    L_P(t) = \abs{tP\cap {\Z}^n}
\end{equation*}
be the number of lattice points contained in the polytope $tP$.

Ehrhart showed in the 1960's that $L_P(t)$ is a rational polynomial of degree $d$ in $t$ \cite{Ehrhart-poly}. We call $L_P$ the \emph{Ehrhart polynomial} of $P$. The following beautiful theorem describes relation between the Ehrhart polynomial and number of lattice points in the relative interior of a polytope.

\begin{theorem}[Ehrhart–Macdonald reciprocity]   \label{th-Ehrhart-recip}
    Let $P^\circ$ be the relative interior of $P$, then
    \begin{equation*}
        L_{P^\circ}(t) = (-1)^dL_P(-t),
    \end{equation*}
where $L_{P^\circ}(t) = |tP^\circ \cap \Z^n|$.  \end{theorem}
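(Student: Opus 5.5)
The plan is to prove the reciprocity through the toric dictionary recalled above, using Serre duality on the projective toric variety of $P$.

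\emph{Reduction to full dimension.} First I reduce to the case $d=n$. Translate $P$ by one of its vertices, which is a lattice point; this changes neither $L_P$ nor $L_{P^\circ}$. Now the affine span of $P$ is a rational linear subspace $V$ of dimension $d$. The lattice $V\cap\Z^n\cong\Z^d$ contains all lattice points of $tP$ and of $tP^\circ$. So we may replace $(\R^n,\Z^n)$ by $(V, V\cap \Z^n)$. After this, $P$ is a full dimensional lattice polytope and $P^\circ$ is its honest interior.

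\emph{Both sides as Euler characteristics.} Let $\Sigma$ be the normal fan of $P$, let $X=X_\Sigma$, and let $\mathscr{L}=\mathscr{L}_P$ be the associated ample toric line bundle. For every integer $t$, the weight decomposition gives $H^0(X,\mathscr{L}^{\otimes t})\cong\bigoplus_{u\in tP\cap M}\C\chi^u$ when $t\ge 0$.
\begin{itemize}
\item By Demazure vanishing, $H^i(X,\mathscr{L}^{\otimes t})=0$ for $i>0$ and $t\ge 0$. Hence $L_P(t)=\chi(X,\mathscr{L}^{\otimes t})$ for $t\geq 0$.
\item The function $t\mapsto\chi(X,\mathscr{L}^{\otimes t})$ is a polynomial in $t$ for all $t\in\Z$, by Snapper's theorem or Riemann--Roch. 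It therefore agrees with the Ehrhart polynomial, so $L_P(-t)=\chi(X,\mathscr{L}^{\otimes -t})$.
\end{itemize}

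\emph{Serre duality.} $X$ is normal with rational singularities, so it is Cohen--Macaulay. Its dualizing sheaf is $\omega_X=\O_X(-\sum_\rho D_\rho)$. Serre duality then gives
\[
\chi(X,\mathscr{L}^{\otimes -t})=(-1)^n\chi(X,\omega_X\otimes\mathscr{L}^{\otimes t}).
\]
It remains to show $\chi(X,\omega_X\otimes\mathscr{L}^{\otimes t})=L_{P^\circ}(t)$ for $t>0$.

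\emph{Global sections of $\omega_X\otimes\mathscr{L}^{\otimes t}$.} The divisor of this sheaf is $\sum_\rho(a_\rho t-1)D_\rho$, where $P=\{u:\langle u,v_\rho\rangle\ge -a_\rho\}$. By the standard description of sections of torus-invariant divisors, $H^0$ has a basis of characters $\chi^u$ with $\langle u,v_\rho\rangle\ge -ta_\rho+1$ for all $\rho$. Since $P$ is a lattice polytope and $\langle u,v_\rho\rangle\in\Z$, these are exactly the lattice points of $tP^\circ$.

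\emph{Main obstacle: higher cohomology.} The hardest step is the vanishing $H^i(X,\omega_X\otimes\mathscr{L}^{\otimes t})=0$ for $i>0$ and $t>0$. This is Kodaira--Kawamata--Viehweg type vanishing on a possibly singular toric variety. I would prove it directly using the toric method.
\begin{itemize}
\item Compute $H^i$ weight-by-weight via the Čech complex of the cover $\{U_\sigma\}$. For each weight $u$, the degree-$u$ part of $H^i$ is the reduced cohomology of a subset of $N_\R$ (or of the sphere).
\item For $\omega_X\otimes\mathscr{L}^{t}$ this subset is $\{v:\langle u,v\rangle< -t\,h(v)\}$-type region cut out by the strict inequalities. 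Convexity of the support function of $P$ makes it either empty, or star-shaped/contractible relative to the whole space.
\item In each case the relevant reduced cohomology vanishes in positive degrees. This is the toric Kodaira/Batyrev--Borisov vanishing.
\end{itemize}

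\emph{Fallback.} If this computation becomes messy, I would instead use the purely combinatorial route: Stanley's reciprocity for the cone over $P$, via rational generating functions $\sum_t L_P(t)z^t$ together with a triangulation into unimodular-free simplices and inclusion–exclusion over faces.
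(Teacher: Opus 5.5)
Your proposal is correct and follows essentially the same route as the paper: realize $L_P(t)$ as $\chi(X_\Sigma,\mathcal{O}(tD))$ for all $t\in\Z$ via Demazure vanishing and polynomiality, apply Serre duality, and use $K_{X_\Sigma}=-\sum_\rho D_\rho$ to identify the sections of $\mathcal{O}(tD+K_{X_\Sigma})$ with the interior lattice points of $tP$. Your version is more complete than the paper's sketch: you reduce to the full-dimensional case, you use Cohen--Macaulay duality rather than the smooth version (the normal fan of $P$ is generally singular), and you state explicitly the Kodaira/Batyrev--Borisov vanishing for $\mathcal{O}(tD+K_{X_\Sigma})$, which the paper leaves implicit.
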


Let $\mathcal{F}$ be a coherent sheaf on a complete variety $X$. Recall that the \emph{Euler characteristic} of $\mathcal{F}$ is defined as
\begin{equation*}
    \chi(X, \mathcal{F}) = \sum_{i\ge0}(-1)^ih^i(X, \mathcal{F}),
\end{equation*}
where $h^i(X, \mathcal{F})=\dim H^i(X, \mathcal{F})$.

For a Cartier divisor $D$ on $X$ let $\mathcal{O}_X(D)$ denote the sheaf of rational functions associated to $D$. One knows that $D \mapsto \chi(X, \mathcal{O}_X(D))$ is a polynomial on the Picard group of $X$ (when $X$ is smooth this follows from the Hirzebruch-Riemann-Roch theorem). For an ample line bundle on a toric variety $X_\Sigma$, there is an elegant connection with the Ehrhart polynomial. Firstly, one knows the following \cite[Section 9.2]{Cox-Little-Schenck}:
\begin{theorem}\label{thm-dem-van}
 For an ample divisor $D$ on $X_\Sigma$, $h^i(X_\Sigma, \mathcal{O}_{X_\Sigma}(D))=0$ for all $i>0$. Hence $\chi(X_\Sigma, \mathcal{O}_{X_\Sigma}(D)) = h^0(X_\Sigma, \mathcal{O}_{X_\Sigma}(D))$.
\end{theorem}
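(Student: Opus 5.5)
This is Demazure vanishing, quoted from \cite[Section 9.2]{Cox-Little-Schenck}; I would reprove it with the standard torus-weight decomposition of Čech cohomology.

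\textbf{Step 1: decompose by weights.} Write $D=\sum_\rho a_\rho D_\rho$ as a $T$-invariant Cartier divisor. Cover $X_\Sigma$ by the affine opens $U_\sigma$, $\sigma\in\Sigma$. On each $U_\sigma$,
$$H^0(U_\sigma,\O(D))=\bigoplus_{u\in M,\ \langle u,v_\rho\rangle\ge -a_\rho\ \forall \rho\in\sigma(1)}\C\,\chi^u .$$
The Čech complex of $\O(D)$ for this cover is therefore $M$-graded. So $H^i(X_\Sigma,\O(D))=\bigoplus_u H^i(X_\Sigma,\O(D))_u$.

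\textbf{Step 2: identify each graded piece topologically.} Fix $u\in M$. The degree-$u$ Čech complex is the cochain complex computing the relative cohomology $\tilde H^{i-1}(V_{D,u})$, with a convention for empty sets. Here
$$V_{D,u}=\bigcup_{\sigma\in\Sigma}\operatorname{Conv}\bigl(v_\rho:\ \rho\in\sigma(1),\ \langle u,v_\rho\rangle<-a_\rho\bigr).$$
Up to homotopy, this is the set of points of $N_\R$ where $\langle u,\cdot\rangle<\varphi_D$, with $\varphi_D$ the support function of $D$. This gives $H^i(X_\Sigma,\O(D))_u\cong\tilde H^{i-1}(V_{D,u},\C)$, which is \cite[Thm 9.1.3]{Cox-Little-Schenck}.

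\textbf{Step 3: use convexity.} Ampleness means $\varphi_D$ is strictly convex, in the paper's sign convention $\varphi_D=-h_{P_D}$ up to sign. Then $\{v\in N_\R:\ \langle u,v\rangle<\varphi_D(v)\}$ is an open set defined by a convex inequality: it is the region where the linear function $u$ lies below a concave piecewise-linear function, which is convex. Hence it is either empty or convex, in particular contractible. In the empty case, $u$ lies in $P_D$. Contractibility gives $\tilde H^{i-1}=0$ for all $i\ge 1$; in the empty case one also checks $i\ge1$ vanishes with the convention $\tilde H^{-1}(\emptyset)=\C$, which only contributes to $H^0$. Summing over $u$ gives $h^i=0$ for $i>0$, and hence $\chi=h^0=|P_D\cap M|$.

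The main obstacle is Step 2: the homotopy equivalence between the combinatorial union of simplices and the convex region. I would handle it by a radial retraction along rays in each cone, where both $\varphi_D$ and $u$ are linear, so the retraction can be done cone by cone.
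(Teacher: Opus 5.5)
Your proposal is correct and is the standard argument of \cite[Sections 9.1--9.2]{Cox-Little-Schenck}: weight decomposition of the \v{C}ech complex, $H^i(X_\Sigma,\O(D))_u\cong\tilde H^{i-1}(V_{D,u},\C)$, and contractibility of the convex set $\{v:\langle u,v\rangle<\varphi_D(v)\}$. This is exactly what the paper invokes by citation rather than reproving. The one step to tighten is the homotopy equivalence in Step 2: radial scaling alone cannot remove the barycentric weight on rays with $\langle u,v_\rho\rangle\ge -a_\rho$. In a simplicial cone, first shrink those coordinates to $0$ (which only increases $\varphi_D-\langle u,\cdot\rangle$) and then rescale; for non-simplicial cones, compare both sides with the common nerve of the cones of $\Sigma$. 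Note also that only nefness of $D$ and completeness of $\Sigma$ are actually used.
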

    
Let $D=\sum_{\rho\in\Sigma(1)}a_\rho D_\rho$ be a torus invariant divisor on $X_\Sigma$. To $D$ there corresponds a polytope \[P_D=\{m\in M_{\R}\mid\dotpro{m}{v_\rho}\ge-a_\rho,\forall\rho\in\Sigma(1)\},\] where $v_\rho$ is the minimal integral generator of a ray $\rho$.
One then has a combinatorial description of the space of global sections: 
\begin{equation*}
    \Gamma(X_\Sigma, \mathcal{O}_{X_\Sigma}(D))=\bigoplus_{m\in P_D\cap M}\C\cdot\chi^m.
\end{equation*}
Here $\chi^m$ is the character of the torus $T$ corresponding to $m \in M$ regarded as a weight section of $\mathcal{O}_{X_\Sigma}(D)$ of weight $m$. It follows that the dimension of the space of global sections is equal to the number of lattice points in $P_D$. 

From polynomiality of the Euler characteristic and the Ehrhart function one obtains the following.
\begin{corollary}\label{cor:chi-ample}
    For an ample divisor $D$ on $X_\Sigma$,
    \begin{equation*}
        \chi(X_\Sigma, \mathcal{O}_{X_\Sigma}(tD))= L_{P_D}(t), ~~\forall t \in \Z
    \end{equation*}
\end{corollary}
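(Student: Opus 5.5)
The plan is to deduce the corollary from two ingredients already available: polynomiality of both sides in $t$, and agreement for all $t>0$ via Theorem \ref{thm-dem-van}.

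First I will handle positive $t$. For $t>0$ the divisor $tD$ is again ample. By Theorem \ref{thm-dem-van}, $\chi(X_\Sigma,\mathcal{O}_{X_\Sigma}(tD)) = h^0(X_\Sigma,\mathcal{O}_{X_\Sigma}(tD))$. By the combinatorial description of global sections, this equals $|P_{tD}\cap M|$. From the defining inequalities one sees directly that $P_{tD}=tP_D$, since $\langle m,v_\rho\rangle\ge -ta_\rho$ holds exactly when $m/t\in P_D$. Hence $\chi(X_\Sigma,\mathcal{O}_{X_\Sigma}(tD)) = L_{P_D}(t)$ for all $t\ge 1$.

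Next I will extend the identity to all integers. Both sides are polynomial functions of $t$:
\begin{itemize}
\item The function $t\mapsto \chi(X_\Sigma,\mathcal{O}_{X_\Sigma}(tD))$ is a polynomial in $t$. This is Snapper's theorem, the polynomiality on the Picard group recalled above.
\item $L_{P_D}$ is by definition the Ehrhart polynomial. Here $P_D$ is a lattice polytope because $D$ is ample, so it is the restriction of a polynomial to positive integers.
\end{itemize}
Two polynomials agreeing on infinitely many integers coincide. So the identity holds for every $t\in\Z$, where $L_{P_D}(t)$ for $t\le 0$ is understood as the value of the Ehrhart polynomial.

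The only delicate point is the polynomiality of $\chi$ in $t$ on a possibly singular projective toric variety. I would cite Snapper's theorem for this (the Euler characteristic of $\mathcal{F}\otimes \mathcal{L}^t$ is a polynomial in $t$ on a projective scheme) rather than Hirzebruch–Riemann–Roch. I would also need that the lattice points of $P_D$ are counted in $M$, which matches the paper's $\Z^n$.

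Optionally, the case $t<0$ can be checked against Theorem \ref{th-Ehrhart-recip}. By Serre duality, $\chi(\mathcal{O}(-tD))=(-1)^n\chi(\omega\otimes\mathcal{O}(tD))$, which counts the interior lattice points of $tP_D$. That is consistent with $(-1)^nL_{P_D^\circ}(t)=L_{P_D}(-t)$, but the argument does not need it.
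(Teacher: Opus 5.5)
Your proposal is correct and follows the paper's route. The paper likewise obtains the corollary by combining vanishing (Theorem~\ref{thm-dem-van}) with the lattice-point description of global sections for $t>0$, and then using polynomiality of both $\chi(X_\Sigma,\mathcal{O}_{X_\Sigma}(tD))$ and $L_{P_D}$ to extend to all $t\in\Z$. Your extra details only make explicit what the paper leaves implicit: the check $P_{tD}=tP_D$, citing Snapper's theorem rather than Hirzebruch--Riemann--Roch for possibly singular $X_\Sigma$, and reading $L_{P_D}(t)$ for $t\le 0$ as the value of the Ehrhart polynomial.
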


\begin{example}
When $t\ge0$, we know $H^0(\P^n,\mathcal{O}_{\P^n}(t))$ consists of homogeneous polynomials in $n+1$ variables of degree $t$, which gives us
\begin{equation*}
    \chi(\P^n,\mathcal{O}_{\P^n}(t))=h^0(\P^n,\mathcal{O}_{\P^n}(t))=\binom{t+n}{n}.
\end{equation*}

On the other hand, let $\Delta_n$ be the standard $n$-simplex. We know $L_{\Delta_n}(t)$ is equal to the number of non-negative integer solutions of
the inequality $x_1+x_2+\cdots+x_n\le t$, hence:
\begin{equation*}
    L_{\Delta_n}(t)=\abs{t\Delta_n\cap\Z^n}=\binom{t+n}{n},
\end{equation*}
which verifies that $\chi(\P^n,\mathcal{O}_{\P^n}(t))=L_{\Delta_n}(t)$.
\end{example}

Finally, we recall the Serre duality theorem.
\begin{theorem}[Serre duality]
    Let $X$ be a smooth complete variety of dimension $n$ over $\C$ and let $\mathcal{E}$ be a vector bundle on $X$. Then there is a natural isomorphism 
    \begin{equation*}
        H^i(X,\mathcal{E})\simeq H^{n-i}(X,K_X\otimes\mathcal{E}^*)^*,
    \end{equation*}
where $K_X:=\bigwedge^n(T^*X)$ is the canonical line bundle.
\end{theorem}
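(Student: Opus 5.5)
We sketch a proof of Serre duality along the classical lines of Grothendieck and Hartshorne. We first treat the case where $X$ is projective, which is the only case used in this note since $X_\Sigma$ will be projective. The general complete case is addressed at the end.

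\textbf{Step 1: projective space.} We take $X=\P^N$ and compute $H^i(\P^N,\O(m))$ directly with the \v{C}ech complex of the standard affine cover. The cohomology is concentrated in degrees $0$ and $N$. Moreover, $H^N(\P^N,\O(-N-1))\cong\C$, spanned by the class of $(x_0\cdots x_N)^{-1}$. The multiplication pairing $H^0(\O(m))\times H^N(\O(-m-N-1))\to H^N(\O(-N-1))\cong\C$ is perfect, since it pairs monomials with inverse monomials. Using $K_{\P^N}\cong\O(-N-1)$, this fixes a trace map $t:H^N(\P^N,K_{\P^N})\to\C$.

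We then extend to all coherent sheaves $\mathcal F$ on $\P^N$. The claim is that the natural map $\operatorname{Ext}^i(\mathcal F,K_{\P^N})\to H^{N-i}(\P^N,\mathcal F)^*$ is an isomorphism. Both sides are contravariant $\delta$-functors in $\mathcal F$. They agree for $i=0$: take a presentation $\bigoplus\O(-q)\to\bigoplus\O(-q)\to\mathcal F\to 0$ and use left exactness together with the computation above. The left side is coeffaceable, since every $\mathcal F$ is a quotient of some $\bigoplus\O(-q)$ with $q\gg0$, and $\operatorname{Ext}^i(\O(-q),K)=H^i(\O(q-N-1))$ vanishes for $i>0$ and $q\gg 0$. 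Hence both sides are universal and therefore agree in every degree.

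\textbf{Step 2: embed $X$ and compare with $K_X$.} Let $X\subset\P^N$ be smooth projective of dimension $n$. Set $\omega_X^\circ=\mathcal{E}xt^{N-n}_{\P^N}(\O_X,K_{\P^N})$. A spectral sequence argument, using that $\mathcal{E}xt^j_{\P^N}(\O_X,K_{\P^N})=0$ for $j<N-n$ (depth/Cohen--Macaulay), gives $\operatorname{Hom}_X(\mathcal F,\omega_X^\circ)\cong\operatorname{Ext}^{N-n}_{\P^N}(\mathcal F,K_{\P^N})\cong H^n(X,\mathcal F)^*$. Thus $\omega_X^\circ$ is a dualizing sheaf with trace map $t:H^n(X,\omega_X^\circ)\to\C$.

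Because $X$ is smooth, it is locally a complete intersection in $\P^N$. The local Koszul resolution then computes $\omega_X^\circ\cong K_{\P^N}|_X\otimes\bigwedge^{N-n}\mathcal N_{X/\P^N}$. By the adjunction formula this is $K_X$.

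\textbf{Step 3: higher degrees.} For smooth (hence Cohen--Macaulay) projective $X$, we show that $\operatorname{Ext}^i_X(\mathcal F,K_X)\to H^{n-i}(X,\mathcal F)^*$ is an isomorphism for all $i$. The argument is the same $\delta$-functor one as in Step 1. The key input is effaceability: for $q\gg0$,
\[
\operatorname{Ext}^i(\O(-q),K_X)=H^i(X,K_X(q))=0 \ \ (i>0)
\]
by Serre vanishing, and
\[
H^{n-i}(X,\O(-q))=0 \ \ (i>0).
\]
I expect the second vanishing to be the main obstacle. It requires the Cohen--Macaulay property, which I would prove via Step 1 by computing $H^{n-i}(X,\O(-q))^*\cong\operatorname{Ext}^{N-n+i}_{\P^N}(\O_X(-q),K_{\P^N})$. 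This is the global sections of $\mathcal{E}xt^{N-n+i}(\O_X,K_{\P^N})(q)$ for $q\gg0$, and these sheaves vanish for $i>0$ because locally $X$ is cut out by a regular sequence.

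Finally, take $\mathcal F=\mathcal E$ locally free. Then $\operatorname{Ext}^i(\mathcal E,K_X)=H^i(X,\mathcal E^*\otimes K_X)$. Exchanging the roles of $\mathcal E$ and $K_X\otimes\mathcal E^*$ and dualizing gives the stated isomorphism $H^i(X,\mathcal E)\cong H^{n-i}(X,K_X\otimes\mathcal E^*)^*$.

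\textbf{General complete $X$.} In this case we would instead invoke Grothendieck duality for the proper morphism $X\to\Spec\C$, which provides the trace map and perfect pairing directly. Alternatively, one can reduce to the projective case via Chow's lemma. Neither route is needed for the applications in this note.
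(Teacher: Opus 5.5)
The paper gives no proof of this statement: Serre duality is only recalled as a standard background fact, used to derive Ehrhart--Macdonald reciprocity, so there is no argument in the paper to compare yours against. Your sketch is the standard proof from Hartshorne, \emph{Algebraic Geometry}, III.7, and it is correct for smooth projective $X$. Its ingredients are:
\begin{itemize}
\item duality on $\P^N$ by universality of $\delta$-functors;
\item the dualizing sheaf $\omega_X^\circ=\mathcal{E}xt^{N-n}_{\P^N}(\O_X,K_{\P^N})$ and its identification with $K_X$ via the Koszul resolution and adjunction;
\item the higher degrees via coeffaceability, together with the vanishing $H^{n-i}(X,\O_X(-q))=0$ for $i>0$, $q\gg0$.
\end{itemize}

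I have two remarks. First, the sentence reducing the general complete case to the projective one ``via Chow's lemma'' is too quick. The projective modification $f:X'\to X$ produced by Chow's lemma may be singular. You would therefore also need resolution of singularities, and then $Rf_*\O_{X'}=\O_X$ and $Rf_*K_{X'}=K_X$ (Grauert--Riemenschneider) to transport both sides through the projection formula. Simply citing Grothendieck duality for $X\to\Spec\C$ is cleaner.

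Second, your comment that the smooth projective case is all the note needs is slightly off. The paper applies duality to the toric variety of the normal fan of an arbitrary lattice polytope, which need not be smooth. What is actually needed there is the Cohen--Macaulay version, and your Step 3 already proves it if you keep $\omega_X^\circ$ rather than replacing it by $K_X$. The only local input is $\mathcal{E}xt^{j}_{\P^N}(\O_X,K_{\P^N})=0$ for $j\neq N-n$, which holds for any Cohen--Macaulay $X$ by Auslander--Buchsbaum. Normal toric varieties are Cohen--Macaulay with $\omega_{X_\Sigma}\cong\O_{X_\Sigma}(-\sum_\rho D_\rho)$, so this version applies to them.
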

This gives us 
\begin{equation*}
    \begin{split}
        \chi(X,\E)&=\sum_{i}(-1)^ih^i(X,\mathcal{E})=\sum_{i}(-1)^ih^{n-i}(X,K_X\otimes\E^*)\\
        &=(-1)^n\sum_{i}(-1)^{i}h^{i}(X,K_X\otimes\mathcal{E}^*)=(-1)^n\chi(X,K_X\otimes\E^*).
    \end{split}
\end{equation*}

Let $\E=\mathcal{O}_{X_\Sigma}(D+K_{X_\Sigma})$, we have
\begin{equation*}
    \chi(X_\Sigma,\mathcal{O}_{X_\Sigma}(D+K_{X_\Sigma}))=(-1)^n\chi(X_{X_\Sigma},\mathcal{O}_{X_\Sigma}(-D)).
\end{equation*}

Since $K_{X_\Sigma}=-\sum_{\rho\in\Sigma(1)}D_\rho$, the polytope $P_{D+K_{X_\Sigma}}$ comes from shrinking $P_D$ by $1$ unit along each facet normal. 
Putting all these together, we get a proof of Theorem \ref{th-Ehrhart-recip}.

\subsection{Khovanskii-Pukhlikov theory of convex chains}  
\label{subsec-KhPu}
As mentioned in the introduction, Khovanskii-Pukhlikov give an elegant generalization of the Ehrhart theory for lattice convex polytopes to the so-called \emph{convex chains} (see \cite{Khovanskii-Pukhlikov-1}). 
In this section we give a brief review of their theory.

Let $\mathscr{P}_{M_\R}$ (respectively $\mathscr{P}_M$) be the set of all the convex polytopes in $M_{\R}$ (respectively all the lattice convex polytopes in $M_\R$). For $P, Q \in \mathscr{P}_{M_\R}$, the Minkowski sum is a new convex polytope defined as
\begin{equation*}
    P\oplus Q=\{x+y\mid x\in P, y\in Q\}.
\end{equation*}
The Minkowski sum satisfies the cancellation property: if $P\oplus Q = P' \oplus Q$ then $P=P'$. The set $\mathscr{P}_{M_\R}$ together with $\oplus$ is a commutative monoid with the origin $\{0\}$ as the identity element. Since the monoid $\mathscr{P}_{M_\R}$ is cancellative, it can be extended to a group, namely its Grothendieck group $\mathscr{P}^*_{M_\R}$, which is called the group of \emph{virtual polytopes}. The inverse of a $P \in \mathscr{P}_{M_\R}$ in $\mathscr{P}^*_{M_\R}$ is denoted by $-P$.

\begin{definition}
    A \emph{convex chain} is a function $\alpha:M_{\R}\to \Z$ of the form $\alpha=\sum_i n_i\mathbbm{1}_{P_i}$, where $n_i\in\Z$ and $P_i\in\mathscr{P}_{M_\R}$. Here, for a subset $P \subset M_\R$, $\mathbbm{1}_P$ denotes the indicator function of $P$ which attains $1$ on $P$ and $0$ outside $P$. 
    
    The sum $\sum_i n_i$ is called the \emph{degree} of $\alpha$, which is denoted by $\deg\alpha$. We say that $\alpha$ is a lattice convex chain if all the $P_i$ are lattice convex polytopes. 
\end{definition}
The set of convex chains with addition of functions is a group. We denote the additive group of convex chains (respectively lattice convex chains) by $\Z(M_\R)$ (respectively by $\Z(M)$). 

\begin{remark}  \label{rem-polytope-alg}
The algebra of convex chains is closely related (but not identical) to McMullen's polytope algebra (see \cite{McMullen, Brion}).   
\end{remark}

\begin{prop}
Minkowski summation of polytopes extends in a unique way to a bilinear operation on $\Z(M_\R)$:
$$*:\ \Z(M_\R) \times \Z(M_\R) \to \Z(M_\R),$$
such that for any two convex polytopes $A$, $B$: $$\mathbbm{1}_A * \mathbbm{1}_B = \mathbbm{1}_{A\oplus B}.$$ 
\end{prop}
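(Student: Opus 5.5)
We need to prove that the Minkowski sum extends uniquely to a bilinear operation $*$ on $\Z(M_\R)$ with $\mathbbm{1}_A * \mathbbm{1}_B = \mathbbm{1}_{A\oplus B}$.

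\emph{Uniqueness} is immediate. The indicator functions $\mathbbm{1}_P$, $P\in\mathscr{P}_{M_\R}$, generate $\Z(M_\R)$ as an abelian group, and a bilinear map is determined by its values on pairs of generators.

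\emph{Existence.} The difficulty is that the $\mathbbm{1}_P$ satisfy many linear relations (inclusion--exclusion, e.g.\ $\mathbbm{1}_{P\cup Q}+\mathbbm{1}_{P\cap Q}=\mathbbm{1}_P+\mathbbm{1}_Q$ when $P\cup Q$ is convex). So we cannot simply define $*$ on generators; we need a description of $\alpha*\beta$ that visibly depends only on the functions $\alpha,\beta$. I would use the Euler-characteristic integral.

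\textbf{Step 1: the integral.} On $\Z(M_\R)$ there is a unique additive functional $\int d\chi$ with $\int \mathbbm{1}_P\,d\chi = 1$ for every nonempty polytope $P$; it is the degree $\deg\alpha$. That it is well defined is Hadwiger's theorem: the Euler characteristic is a valuation on polyconvex sets. I would prove it by induction on dimension. Define
\[
\int \alpha\, d\chi \;=\; \sum_{t\in\R}\Bigl(\int \alpha|_{x_1=t}\,d\chi-\lim_{s\to t^+}\int\alpha|_{x_1=s}\,d\chi\Bigr),
\]
where the slices $\alpha|_{x_1=t}$ are convex chains in dimension $n-1$. This formula depends only on the function $\alpha$, is additive, and gives $1$ on each nonempty polytope: the slice of $P$ is nonempty exactly for $t$ in a closed interval, so only the top endpoint contributes, with value $1$.

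\textbf{Step 2: define $*$ by convolution.} Set
\[
(\alpha*\beta)(x)\;=\;\int \alpha(y)\,\beta(x-y)\,d\chi(y).
\]
For each fixed $x$, the function $y\mapsto \alpha(y)\beta(x-y)$ is a convex chain, since a product of indicators of polytopes is the indicator of their intersection and $x-Q$ is a polytope. So the integral makes sense.

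\textbf{Step 3: check the two required properties.}
\begin{itemize}
\item[(a)] $*$ is bilinear, because the integral of Step 1 is additive and the integrand is bilinear in $(\alpha,\beta)$.
\item[(b)] For polytopes $A,B$, the integrand is $\mathbbm{1}_{A\cap(x-B)}$. Its integral is $1$ exactly when $A\cap(x-B)\neq\emptyset$, i.e.\ when $x\in A\oplus B$, and $0$ otherwise. Hence $\mathbbm{1}_A*\mathbbm{1}_B=\mathbbm{1}_{A\oplus B}$.
\end{itemize}
Property (b) also shows that $\alpha*\beta$ is again a finite combination of indicators of polytopes, so $*$ lands in $\Z(M_\R)$.

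\textbf{Main obstacle.} The hard step is Step 1: showing that the slicing formula is a well-defined additive functional taking the value $1$ on every nonempty polytope. The points to check are that the slices of a convex chain are convex chains, that only finitely many $t$ contribute to the sum, and that the induction on dimension closes. Once Step 1 is in place, Steps 2 and 3 are formal.
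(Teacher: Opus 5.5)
The paper gives no proof of this proposition; it is quoted as background from Khovanskii--Pukhlikov. Your argument is correct, and it is the standard way the result is established. Hadwiger's slicing induction shows that the functional $\alpha\mapsto\int\alpha\,d\chi$, which is the paper's $\deg\alpha$, is well defined on functions and not just on formal sums. Defining $*$ as Euler-characteristic convolution then gives uniqueness, bilinearity, $\mathbbm{1}_A*\mathbbm{1}_B=\mathbbm{1}_{A\oplus B}$ and closure in $\Z(M_\R)$ exactly as you describe.

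Your route also buys two things the paper leaves implicit.

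First, your Step 1 justifies facts the paper uses without comment. The degree $\deg\alpha=\sum_i n_i$ is independent of the chosen representation of $\alpha$ only because it equals $\int\alpha\,d\chi$. The multi-valued support function is the same slicing construction taken in direction $x$. For $x\neq 0$, set $F(s)=\int\alpha|_{\{\langle\cdot,x\rangle=s\}}\,d\chi$. Then the coefficient of $[c]$ in $\h_\alpha(x)$ is the jump $F(c)-\lim_{s\to c^+}F(s)$. Hence $\h_\alpha$ is also well defined on functions.

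Second, there is an alternative way to define $*$: transport pointwise multiplication in $\Z[\R]$ through $\alpha\mapsto\h_\alpha$, as the paper's later proposition suggests. That route needs injectivity of $\alpha\mapsto\h_\alpha$, which is a separate and less elementary inversion statement. Your convolution definition needs only the well-definedness of the Euler-characteristic integral, so it is the more economical proof.
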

We take $*$ as the multiplication on $\Z(M)$. Under this multiplication, $\Z(M)$ becomes a commutative ring with identity $\mathbbm{1}_{\{0\}}$, where $0$ is the origin in $M_{\R}$.

The \emph{support function} of a polytope $P\in\mathscr{P}_M$ is a piecewise-linear convex function $h_P:N_{\R}\to\R$,

\begin{equation*}
    h_P(x)=\max_{m\in P}\dotpro{m}{x}.
\end{equation*}

One verifies that for $P,Q\in\mathscr{P}_M$,
    \begin{equation*}
        h_{P\oplus Q}=h_P + h_Q.
    \end{equation*}

\begin{definition}
    Let $R=P\oplus(-Q)$ be a virtual polytope. The support function of $R$ is defined as $h_R=h_P-h_Q$.
\end{definition}

Under this definition, the set of support functions becomes a group.

\begin{theorem}\label{thm-vir-supp}
    The group of virtual polytopes $\mathscr{P}^*_M$ and the group of support functions are isomorphic.
\end{theorem}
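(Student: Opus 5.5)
The statement to prove is Theorem~\ref{thm-vir-supp}: the map $R \mapsto h_R$ from the group of virtual polytopes $\mathscr{P}^*_M$ to the group of support functions is a group isomorphism. The plan has four steps: show the map is well defined, show it is a homomorphism, note that it is surjective by construction, and prove injectivity. Injectivity will reduce to the fact that a convex polytope is determined by its support function.

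\emph{Well-definedness.} A virtual polytope is a formal difference $P \oplus (-Q)$. Two such expressions $P\oplus(-Q)$ and $P'\oplus(-Q')$ are equal exactly when $P\oplus Q' = P'\oplus Q$ in the cancellative monoid $\mathscr{P}_M$. Applying the additivity $h_{A\oplus B}=h_A+h_B$ to this equality gives $h_P + h_{Q'} = h_{P'} + h_Q$, that is, $h_P - h_Q = h_{P'} - h_{Q'}$. Hence $h_R$ does not depend on the chosen representative of $R$.

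\emph{Homomorphism and surjectivity.} Additivity again gives
$$h_{(P\oplus(-Q))\oplus(P'\oplus(-Q'))} = h_{P\oplus P'} - h_{Q\oplus Q'} = (h_P - h_Q) + (h_{P'} - h_{Q'}).$$
The group of support functions is by definition the set of all differences $h_P - h_Q$, so the map is surjective.

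\emph{Injectivity.} Suppose $h_P - h_Q = 0$. Then $h_P = h_Q$, and it suffices to show that this forces $P = Q$, since then $P\oplus(-Q)$ is the identity of $\mathscr{P}^*_M$. This is the one step with real content. I would argue by separation: a compact convex set equals the intersection of its supporting half-spaces, so
$$P=\{m \in M_\R : \dotpro{m}{x}\le h_P(x)\ \ \forall x\in N_\R\}.$$
For the inclusion $\supseteq$, take $m\notin P$. By the Hahn--Banach (separating hyperplane) theorem in finite dimensions there is an $x\in N_\R$ with $\dotpro{m}{x} > \max_{p\in P}\dotpro{p}{x} = h_P(x)$, so $m$ is not in the right-hand side. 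The inclusion $\subseteq$ is immediate from the definition of $h_P$. Applying the same description to $Q$, the equality $h_P=h_Q$ gives $P=Q$.
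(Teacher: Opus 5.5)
Your proof is correct. The paper does not prove this statement; it quotes it as background from the Khovanskii--Pukhlikov theory. The argument behind it is the one you give: additivity $h_{P\oplus Q}=h_P+h_Q$ makes $P\oplus(-Q)\mapsto h_P-h_Q$ a well-defined, surjective homomorphism. Injectivity then reduces to the fact that a compact convex set is the intersection of its supporting half-spaces, so it is determined by its support function.

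Two small points would make the argument fully self-contained:
\begin{itemize}
\item The set $\{h_P-h_Q\}$ is closed under addition and negation, with identity $h_{\{0\}}=0$, so it really is a group.
\item Your well-definedness step uses the cancellation property, which the paper only asserts. Cancellation follows from your separation argument: $h_{P\oplus Q}=h_{P'\oplus Q}$ gives $h_P=h_{P'}$, hence $P=P'$.
\end{itemize}
So the whole proof rests on the one lemma you prove, and the lattice condition on the polytopes plays no role.
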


\begin{definition}
    The \emph{(multi-valued) support function} of a convex chain $\alpha=\sum_i n_i\mathbbm{1}_{P_i}\in \Z(M)$ is a function
    \begin{equation*}
        \begin{split}
            \h_\alpha: N_{\R}\to& \Z[\R], \\
            x\mapsto& \sum_i n_i [h_{P_i}(x)].
        \end{split}
    \end{equation*}
\end{definition}

The addition in $\Z[\R]$ is the formal sum and the multiplication in $\Z[\R]$ comes from the usual sum in $\R$.

\begin{prop}
    The association of a convex chain to its support function is a ring isomorphism between $\Z(M)$ and the ring of all piecewise-linear functions from $N_{\R}$ to $\Z[\R]$.
\end{prop}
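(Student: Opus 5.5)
The plan is to prove the proposition in three steps: well-definedness and the ring homomorphism property, then surjectivity, then injectivity.

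\textbf{Step 1: well-definedness and homomorphism.} The issue is that a convex chain $\alpha$ has many representations $\sum_i n_i\mathbbm{1}_{P_i}$, so I first show that $\h_\alpha$ depends only on the function $\alpha$. I would use the Euler characteristic integral $\int \cdot\, d\chi$ on convex chains, the valuation with $\int \mathbbm{1}_P\,d\chi=1$ for every nonempty polytope $P$. Its existence is the Hadwiger--Groemer extension theorem, which I take as standard in the Khovanskii--Pukhlikov setting. For a polytope $P$ and $x\in N_\R$, the intersection $P\cap\{\dotpro{m}{x}\ge c\}$ is nonempty exactly when $c\le h_P(x)$. Hence the coefficient of $[c]$ in $\h_\alpha(x)$ equals the jump
\[
\int \alpha\cdot\mathbbm{1}_{\{\dotpro{m}{x}\ge c\}}\,d\chi-\int \alpha\cdot\mathbbm{1}_{\{\dotpro{m}{x}> c\}}\,d\chi .
\]
This expression is intrinsic to $\alpha$, which proves well-definedness. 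Additivity is then clear. Multiplicativity on generators follows from $h_{A\oplus B}=h_A+h_B$ together with $[a]\cdot[b]=[a+b]$ in $\Z[\R]$, and it extends to all chains by bilinearity of $*$. The map also sends $\mathbbm{1}_{\{0\}}$ to $[0]$.

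\textbf{Step 2: surjectivity.} I read ``piecewise-linear functions $N_\R\to\Z[\R]$'' as finite sums $\sum_j n_j[f_j]$ with each $f_j\in\PL(N,\Z)$. It suffices to hit a single $[f]$.
\begin{itemize}
\item Refine the fan so that $f$ is linear on its cones, and write $f=h_P-h_Q$ with $P,Q$ lattice polytopes. For instance, take $h_Q$ strictly convex on a projective refinement and scaled enough that $f+h_Q$ is convex; Theorem \ref{thm-vir-supp} provides the needed correspondence.
\item Then $[f]=\h(\mathbbm{1}_P)\cdot\h(\mathbbm{1}_Q)^{-1}$, so it is enough to show that $\mathbbm{1}_Q$ is invertible in $\Z(M)$.
\item For this I use the Khovanskii--Pukhlikov formula: $\mathbbm{1}_Q*\bigl((-1)^{\dim Q}\mathbbm{1}_{-\interior Q}\bigr)=\mathbbm{1}_{\{0\}}$. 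Here $\mathbbm{1}_{-\interior Q}$ is a lattice convex chain, because it is an alternating sum of indicator functions of faces.
\item To verify the formula, note that by Step 1 it suffices to check support functions. The support function of $\mathbbm{1}_{\interior Q}$ can be computed face by face. At a generic $x$ it equals $\sum_{F\ni \text{face}_x}(-1)^{\dim Q-\dim F}[h_Q(x)]$, which is $(-1)^{\dim Q}[h_Q(x)]$ by the Euler relation on the face poset above $Q^x$.
\end{itemize}

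\textbf{Step 3: injectivity.} This is the main obstacle. I must show that $\h_\alpha\equiv 0$ forces $\alpha=0$. By Step 1, vanishing of $\h_\alpha$ means that the fiberwise Euler pushforward $\pi_{x*}\alpha$ along each linear functional $\dotpro{\cdot}{x}$ is zero as a chain on $\R$. This holds for all $x$, and by continuity in $x$ for all $x$ in a dense set. So the statement reduces to an Euler-characteristic Radon inversion: a constructible, polyhedral function whose integrals over all half-spaces vanish must itself vanish.

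I would prove this by induction on $n$. The case $n=1$ is clear, since $\alpha$ is then a combination of points and intervals determined by its jumps. For the inductive step, I would use that $h_{P^x}$, the support function of the face of $P$ in direction $x$, is the directional derivative of $h_P$ at $x$. So $\h_\alpha$ determines the support functions of the face-restrictions $\alpha^x$, and these vanish. Combining this with the half-space integrals, I would try to recover $\int\alpha\cdot\mathbbm{1}_K\,d\chi$ for small boxes $K$ around each point $m$, and hence $\alpha(m)$. If the direct argument gets stuck, the fallback is to use the ring structure. Multiplying $\alpha$ by invertible chains (Step 2) preserves $\h_\alpha=0$. Translating by a large polytope, I can reduce to $\alpha$ supported in a region where the half-space data at generic $x$ separates the vertices of the $P_i$, and then argue by peeling off extreme points.
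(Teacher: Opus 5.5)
Your Step 2 is the paper's own argument: the paper's only justification for this proposition is that a piecewise linear function is a difference $h_P-h_Q$ of convex ones, which gives surjectivity, and the rest is left to Khovanskii--Pukhlikov. As a proof of the isomorphism, however, your proposal has a genuine gap in Step 3.

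\textbf{The gap: injectivity.} Step 3 is planned, not proved. Your reduction ends at an Euler--Radon inversion statement (a theorem of Schapira), which you neither prove nor cite. The vanishing of the face restrictions $\alpha^x$ is essentially a reformulation of $\h_\alpha=0$: at generic $x$ it says the signed multiset of $x$-maximal vertices is zero. The real content is passing from half-space data to values of $\alpha$ at points, and that is left as ``I would try''. The fallback (translating and peeling off extreme points) is too vague to check.

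The missing idea is an explicit inversion formula. For a polytope $P$ and $m\in M_\R$ put $U_{P,m}=\{x\in S^{n-1}: h_P(x)<\dotpro{m}{x}\}$.
If $m\in P$, this set is empty.
If $m\notin P$, it is nonempty by separation, open, spherically convex, and contained in the open hemisphere $\{\dotpro{p_0-m}{x}<0\}$ for any $p_0\in P$. Hence it is an open $(n-1)$-cell, with compactly supported Euler characteristic $\chi_c(U_{P,m})=(-1)^{n-1}$.

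So $\mathbbm{1}_P(m)=1-(-1)^{n-1}\chi_c(U_{P,m})$. Summing over $\alpha=\sum_i n_i\mathbbm{1}_{P_i}$ gives
\[
\alpha(m)=\deg\alpha+(-1)^{n}\int_{S^{n-1}}\nu_m\,d\chi_c .
\]
Here $\nu_m(x)$ is the sum of the coefficients of $\h_\alpha(x)\in\Z[\R]$ at the elements $[a]$ with $a<\dotpro{m}{x}$, and $\deg\alpha$ is the sum of all coefficients of $\h_\alpha(x)$. Both depend only on $\h_\alpha$, so $\h_\alpha=0$ forces $\alpha=0$. Alternatively, once you show all half-space integrals vanish, cite Schapira's inversion formula.

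\textbf{Step 1 needs repair.} Your displayed difference equals $\int\alpha\,\mathbbm{1}_{\{\dotpro{m}{x}=c\}}\,d\chi$. For $\alpha=\mathbbm{1}_P$ this is $1$ for every $c$ with $\min_P\dotpro{\cdot}{x}\le c\le h_P(x)$, because $\chi(P\cap\{\dotpro{m}{x}>c\})=1-1=0$ in that range. So it computes the push-forward $\pi_{x*}\alpha$, not the coefficient of $[c]$ in $\h_\alpha(x)$.

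Use instead that $g(c)=\int\alpha\,\mathbbm{1}_{\{\dotpro{m}{x}\ge c\}}\,d\chi=\sum_i n_i\mathbbm{1}_{(-\infty,h_{P_i}(x)]}(c)$ is intrinsic to $\alpha$. It determines $\h_\alpha(x)$ through its jumps $g(c)-\lim_{\epsilon\to 0^+}g(c+\epsilon)$.

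The first sentence of Step 3 then needs a new justification. If $\h_\alpha=0$, all closed half-space integrals and $\deg\alpha$ vanish, hence so do all hyperplane integrals.

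\textbf{Step 2 needs repair.} The claim $\h_{\mathbbm{1}_{\interior Q}}(x)=(-1)^{\dim Q}[h_Q(x)]$ is false. For $\dim Q\ge 1$, the faces through the maximizing vertex $v$ contribute $\sum_{F\ni v}(-1)^{\dim Q-\dim F}=0$, and the remaining faces do not drop out.

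The correct value is $(-1)^{\dim Q}[\min_{q\in Q}\dotpro{q}{x}]$. For example, with $Q=[0,1]$ and $x=1$, the chain $\mathbbm{1}_{(0,1)}=\mathbbm{1}_{[0,1]}-\mathbbm{1}_{\{0\}}-\mathbbm{1}_{\{1\}}$ has value $-[0]$, not $-[1]$. With your value, the product of $\h_{\mathbbm{1}_Q}$ with the support function of $(-1)^{\dim Q}\mathbbm{1}_{-\interior Q}$ would be $[h_Q(x)+h_Q(-x)]$, not $[0]$.

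Also, ``by Step 1 it suffices to check support functions'' uses injectivity, not Step 1. For surjectivity you only need the support function of that chain to be $[-h_Q]$. The corrected computation, or Theorem \ref{th-Minkowski-inversion}, supplies this.
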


The surjectivity comes from the fact that a piecewise-linear function is the difference of two piecewise-linear convex functions.

For a polytope $P$ in $M_{\R}$, we have 
\begin{equation*}
    (-1)^{\dim P}\mathbbm{1}_{P^\circ}=\sum_{\Delta\in\Gamma(P)}(-1)^{\dim \Delta}\mathbbm{1}_\Delta,
\end{equation*}
where $\Gamma(P)$ is the set of all faces of $P$, including $P$ itself.

The following theorem gives us an ``inverse'' of convex polytopes.

\begin{theorem}[Minkowski inversion]  \label{th-Minkowski-inversion}
    Let $P\in\mathscr{P}_M$, then 
    \begin{equation*}
        (-1)^{\dim P}\mathbbm{1}_{(\symmetry P)^\circ}*\mathbbm{1}_P=\mathbbm{1}_{\{0\}},
    \end{equation*}
    where $\symmetry P$ is the central symmetry of $P$ with respect to the origin.
\end{theorem}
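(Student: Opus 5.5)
We want to show that $(-1)^{\dim P}\mathbbm{1}_{(\symmetry P)^\circ}*\mathbbm{1}_P=\mathbbm{1}_{\{0\}}$. The approach is to expand the relative interior using the face formula stated just before the theorem, and then evaluate the resulting convex chain pointwise via Euler characteristics.

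First, apply the face formula to $\symmetry P$. This gives
\[
(-1)^{\dim P}\mathbbm{1}_{(\symmetry P)^\circ}=\sum_{\Delta\in\Gamma(P)}(-1)^{\dim\Delta}\mathbbm{1}_{-\Delta}.
\]
By bilinearity of $*$, the left-hand side of the theorem therefore equals
\[
\beta:=\sum_{\Delta\in\Gamma(P)}(-1)^{\dim \Delta}\mathbbm{1}_{P\oplus(-\Delta)}.
\]
So it suffices to show $\beta(x)=1$ for $x=0$ and $\beta(x)=0$ for $x\neq 0$.

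\emph{Case $x=0$.} Every face $\Delta$ of $P$ is contained in $P$, so $0\in P\oplus(-\Delta)$ for every $\Delta$. Hence
\[
\beta(0)=\sum_{\Delta\in\Gamma(P)}(-1)^{\dim\Delta}.
\]
This is the Euler characteristic of the closed polytope $P$, which is contractible, so $\beta(0)=1$.

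\emph{Case $x\neq 0$.} Note that $x\in P\oplus(-\Delta)$ if and only if $(x+\Delta)\cap P\neq\emptyset$, which is equivalent to $\Delta\cap(P-x)\neq\emptyset$. Hence
\[
\beta(x)=\sum_{\Delta\in\Gamma(P)}(-1)^{\dim\Delta}\,\mathbbm{1}\bigl[\Delta\cap(P-x)\neq\emptyset\bigr].
\]
This is the combinatorial Euler characteristic of the open-star–type set $\bigcup\{\Delta^\circ : \Delta\cap (P-x)\neq\emptyset\}$. The goal is to show it vanishes by a shelling/visibility argument.

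Put $Q=P-x$. Rather than a direct count, I would use the valuation property of the Euler characteristic $\chi$ on convex chains, namely $\chi(\mathbbm{1}_A)=1$ for every nonempty compact convex $A$. Consider the family of translates $P-tx$ for $t\in[0,1]$, or more simply the following description. The union of the faces of $P$ that meet $Q$ can be compared with the union of the faces that do not meet $Q$.

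Since $\sum_{\Delta}(-1)^{\dim\Delta}=1$, we have
\[
\beta(x)=1-\sum_{\Delta\cap Q=\emptyset}(-1)^{\dim\Delta}.
\]
The faces of $P$ disjoint from $Q=P-x$ form a subcomplex $K$ of the boundary complex of $P$. The claim to establish is that $K$ is exactly the set of faces $\Delta$ for which there is a linear functional $\xi$ with:
\begin{itemize}
\item $\Delta$ contained in the face $P^\xi$ of $P$ maximizing $\xi$, and
\item $\langle \xi, x\rangle>0$ together with the strict separation $\Delta\cap Q=\emptyset$.
\end{itemize}
Equivalently, $\Delta$ is "visible from direction $x$", and $K$ is the union of the faces of $P$ visible from a point at infinity in direction $x$. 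Such a region is a contractible polyhedral ball; it is homeomorphic, via projection along $x$, to the projection of $P$. Therefore $\chi(K)=1$ and $\beta(x)=0$.

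The main obstacle is justifying this characterization of $K$ and its contractibility. It may be cleaner to avoid topology entirely by an induction on $\dim P$ using Khovanskii–Pukhlikov's Euler-integral: integrate $\beta$ along lines parallel to a generic direction and use that $\int \mathbbm{1}_A\,d\chi=1$ is multiplicative under $*$. That gives
\[
\int\beta\,d\chi=(-1)^{\dim P}\chi\bigl((\symmetry P)^\circ\bigr)\cdot 1=1,
\]
and the pointwise statement would then follow by a local version of the same argument. I would try the visibility-ball argument first, and fall back on the Euler-integral method if it proves hard to make rigorous.
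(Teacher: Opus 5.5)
The paper quotes this theorem from Khovanskii--Pukhlikov without proof, so your argument is judged on its own. Your reductions are correct: the face formula for $\symmetry P$, bilinearity giving $\beta=\sum_{\Delta}(-1)^{\dim\Delta}\mathbbm{1}_{P\oplus(-\Delta)}$, the case $x=0$ via Euler's relation, and $\beta(x)=\sum_{\Delta\cap Q\neq\emptyset}(-1)^{\dim\Delta}$ with $Q=P-x$.

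\textbf{The gap is the case $x\neq 0$.} Your claim that $K=\{\Delta:\Delta\cap Q=\emptyset\}$ is exactly the set of faces visible from direction $x$ is false, so the contractibility argument built on it does not apply.

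\emph{Why visibility is the wrong criterion.} Visibility (some $\xi$ with $\Delta\subseteq P^\xi$ and $\langle\xi,x\rangle>0$) is equivalent to $(\Delta+\varepsilon x)\cap P=\emptyset$ for all small $\varepsilon>0$. So it describes $K$ for an infinitesimal translation only. For the actual translate, a non-visible face still lies in $K$ whenever $x\notin P-\Delta$, and this is compatible with $x\in P-P$.

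\emph{A counterexample.} Take $P=\operatorname{conv}\{(0,0),(0,-2),(6,-2),(6,-1),(2,1)\}$ and $x=(5,0)$.
\begin{itemize}
\item $(0,-1)\in P\cap Q$, so $P\cap Q\neq\emptyset$.
\item $x=\tfrac{5}{2}(2,1)+\tfrac{5}{2}(0,-1)$ lies in the interior of the tangent cone at the vertex $(0,0)$, so $(0,0)$ is not visible.
\item The horizontal chord of $P$ at height $0$ is $[0,4]\times\{0\}$, so $(0,0)+x\notin P$. Hence $(0,0)\in K$.
\item Likewise the edge $[(0,0),(2,1)]$ lies in $K$, although its outer normal $(-1,2)$ pairs negatively with $x$.
\end{itemize}
Here $\chi(K)=1$ anyway, as it must be, but not for the reason you give. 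Also, when $P\cap Q=\emptyset$, $K$ contains $P$ itself, which is never visible.

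\emph{The fallback does not close the gap.} $\int\beta\,d\chi=1$ is just the degree of $\beta$ and carries no pointwise information. The unspecified ``local version'' is the whole content of the theorem.

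\textbf{The missing step is short.} Each $\Delta\cap Q$ is compact and convex, so $[\Delta\cap Q\neq\emptyset]=\chi(\mathbbm{1}_{\Delta\cap Q})$. Additivity of $\chi$ and the face formula then give
\[
\beta(x)=\chi\Bigl(\sum_{\Delta\in\Gamma(P)}(-1)^{\dim\Delta}\mathbbm{1}_{\Delta}\,\mathbbm{1}_{Q}\Bigr)=(-1)^{\dim P}\,\chi\bigl(\mathbbm{1}_{P^\circ\cap(P-x)}\bigr).
\]
If $x\neq 0$ is not parallel to the affine span of $P$, the set $P^\circ\cap(P-x)$ is empty. Otherwise, slice it by lines $\ell$ parallel to $x$. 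Suppose $\ell$ meets $P^\circ$ and $P\cap\ell=[a,b]$, in a coordinate where translation by $x$ is $t\mapsto t+s$ with $s>0$. Then $P^\circ\cap\ell=(a,b)$ and $(P-x)\cap\ell=[a-s,b-s]$. So each slice is empty or the half-open interval $(a,b-s]$, whose Euler characteristic is $0$. Fubini for the Euler characteristic, projecting along $x$, gives $\chi(P^\circ\cap(P-x))=0$, hence $\beta(x)=0$.

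This is your line-integration idea, but applied pointwise to $P^\circ\cap(P-x)$ and along the direction of $x$ itself rather than a generic direction.
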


From the above theorem, we have 
\begin{equation*}
    (\mathbbm{1}_P)^{-1} = \sum_{\Delta\in\Gamma(\symmetry P)}(-1)^{\dim \Delta}\mathbbm{1}_\Delta.
\end{equation*}

From Euler characteristic for convex polytopes, we can see from the above formula that
\begin{equation*}
    \deg(\mathbbm{1}_P)^{-1} = \deg\sum_{\Delta\in\Gamma(\symmetry P)}(-1)^{\dim \Delta}\mathbbm{1}_\Delta=
    \sum_{\Delta\in\Gamma(\symmetry P)}(-1)^{\dim \Delta}=1,
\end{equation*}
which corresponds to invertible elements $\alpha\in\Z(M)$ satisfying $\deg\alpha=1$.

The convex chain associated to a virtual polytope $P\oplus(-Q)$ is given by $\mathbbm{1}_P*(\mathbbm{1}_Q)^{-1}$.

\subsection{Toric vector bundles}
\label{subsec-TVBs}
We recall that a \emph{toric vector bundle} on a toric variety $X_\Sigma$ is a vector bundle $\mathcal{E}$ together with a $T$-linearization, i.e. there is an action of $T$ on $\E$ that lifts the $T$-action on $X_\Sigma$ such that the action on each fiber is linear.

In \cite{Klyachko}, Klyachko gives a classification of toric vector bundles on toric varieties, which is an extension of the classification of equivariant line bundles on toric varieties by integral piecewise linear functions.

Consider a toric vector bundle $\E$ on a toric variety $X_\Sigma$. The action of $T$ induces a $T$-action on the sections $\Gamma(U_\sigma,\mathcal{E})$ for any cone $\sigma\in\Sigma$:
\begin{equation*}
    (t\cdot s)(x)=t(s(t^{-1}x)),
\end{equation*}
where $s\in\Gamma(U_\sigma,\mathcal{E}),x\in U_\sigma$. We have a decomposition into $T$-eigenspaces $$\Gamma(U_\sigma,\mathcal{E})=\bigoplus_{u\in M}\Gamma(U_\sigma,\mathcal{E})_u,$$ where $\Gamma(U_\sigma,\mathcal{E})_u=\{s\in\Gamma(U_\sigma,\mathcal{E})\mid t\cdot s = \chi^u(t)s,\ \forall t\in T\}$.

We fix a point $x_0$ in the open orbit in $X_\Sigma$. Let $E=\mathcal{E}_{x_0}$ be the fiber of $\mathcal{E}$ over $x_0$. For $s\in\Gamma(U_\sigma,\mathcal{E})_u$, since
\begin{equation*}
    s(t x_0)=\chi^{-u}(t)t(s(t^{-1}tx_0))=\chi^{-u}(t)t(s(x_0)),
\end{equation*}
$s$ is determined by its value at $x_0$. This gives an injection $\Gamma(U_\sigma,\mathcal{E})_u\overset{\iota}{\hookrightarrow} E$ via $\iota(s)=s(x_0)$. Let the image of $\Gamma(U_\sigma,\mathcal{E})_u$ in $E$ be $E^\sigma_u$.

For $u'\in\sigma^\vee\cap M$, multiplication by $\chi^{u'}$ gives an injection $\Gamma(U_\sigma,\mathcal{E})_u\hookrightarrow \Gamma(U_\sigma,\mathcal{E})_{u-u'}$.
Moreover, the multiplication map by $\chi^{u'}$ commutes with the evaluation at $x_0$ and hence induces an inclusion $E_u^\sigma\subseteq E^\sigma_{u-u'}$. If $u' \in \sigma^\perp$, where $\sigma^\perp=\{u\in M_\R\mid \dotpro{u}{x}=0,\ \forall x\in\sigma\}$, then these maps are isomorphisms and thus $E_u^\sigma$ depends only on the class $[u] \in M_\sigma=M/(\sigma^\perp\cap M)$. 

For any $\rho\in\Sigma(1)$, we define
\begin{equation*}
    E^\rho(i)=E_u^\rho,
\end{equation*}
where $\dotpro{u}{v_\rho}=i$. Since $E^\sigma_u$ only depends on $[u]\in M_\sigma$, $E^\rho(i)$ is well-defined. Thus we have a decreasing filtration of $E$ $$\cdots\supseteq E^\rho(i-1)\supseteq E^\rho(i)\supseteq E^\rho(i+1)\cdots.$$

An important step in the classification of toric vector bundles is that a toric vector bundle over an affine toric variety is \emph{equivariantly trivial}. That is, it decomposes $T$-equivariantly as a sum of trivial line bundles (see \cite[Proposition 2.1.1]{Klyachko}):
\begin{prop}\label{prop:toric-affine-equiv-trivial}
    Every toric vector bundle $\mathcal{E}$ with rank $r$ on an affine toric variety $U_\sigma$ splits equivariantly into a sum of trivial line bundles:
    \begin{equation*}
\mathcal{E}=\bigoplus_{j=1}^r\mathcal{L}_{u_j},
    \end{equation*}
    where $\mathcal{L}_{u_j}$ denotes the trivial toric line bundle on $U_\sigma$ where $T$ acts on it via the character $u_j$.
\end{prop}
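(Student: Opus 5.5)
This is Klyachko's result that a toric vector bundle on an affine toric variety splits equivariantly into trivial line bundles. The plan is to produce $r$ homogeneous sections whose values at the torus fixed point form a basis of the fiber, and then show that they trivialize the bundle on all of $U_\sigma$.

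First I reduce to the case where $\sigma$ is full dimensional. Write $U_\sigma \cong U_{\sigma'} \times T'$, where $\sigma'$ is full dimensional in $\Span(\sigma)$ and $T'$ is a complementary torus. The bundle $\E$ is then the pullback of a toric bundle on $U_{\sigma'}$ twisted by the $T'$-action; alternatively one can run the argument below with the distinguished orbit $O_\sigma$ in place of a fixed point. So assume $\sigma$ is full dimensional, and let $x_\sigma \in U_\sigma$ be the unique $T$-fixed point.

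Since $U_\sigma$ is affine, $\Gamma(U_\sigma,\E)$ is a finitely generated $\C[\sigma^\vee\cap M]$-module whose sections generate every fiber. It carries a rational $T$-action, so it decomposes into weight spaces $\bigoplus_u \Gamma(U_\sigma,\E)_u$ as recalled above. The fiber $\E_{x_\sigma}$ is a $T$-representation, and the evaluation map
\[
\Gamma(U_\sigma,\E)\to \E_{x_\sigma}
\]
is surjective and $T$-equivariant. Because $T$ is linearly reductive, each weight space of $\E_{x_\sigma}$ is the image of the corresponding weight space of sections. I can therefore choose weight sections $s_1,\dots,s_r$, with $s_j$ of weight $u_j$, whose values at $x_\sigma$ form a basis of $\E_{x_\sigma}$.

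These sections give an equivariant morphism $\phi:\bigoplus_j \mathcal{L}_{u_j}\to\E$ of rank-$r$ bundles, where the trivial summand $\mathcal{L}_{u_j}$ is sent to $\E$ via $1\mapsto s_j$. The sign convention for which character labels $\mathcal{L}_{u_j}$ must be matched to the action formula above. The locus where $\phi$ fails to be an isomorphism is the zero set of $\det\phi$, a section of a line bundle. This locus is closed and $T$-invariant, and it does not contain $x_\sigma$. Every nonempty closed $T$-invariant subset of $U_\sigma$ contains $x_\sigma$, since $x_\sigma$ lies in the closure of every orbit. Hence the degeneracy locus is empty and $\phi$ is an equivariant isomorphism.

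The main obstacle is the non-full-dimensional case, where there is no fixed point. There I would use the closed orbit $O_\sigma$ instead. I would choose weight sections whose restrictions to $O_\sigma$ form an equivariant frame, which is possible because equivariant bundles on a torus orbit are sums of character line bundles, as one sees by descent along $T\to O_\sigma$. The same closed-invariant-set argument then applies, since every orbit closure in $U_\sigma$ meets $O_\sigma$.
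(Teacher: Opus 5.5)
The paper gives no argument of its own here. The statement is quoted from Klyachko (Proposition 2.1.1 of his paper) and used as a black box, so your proposal is a self-contained proof of a fact the note only cites. It is correct.

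\textbf{Fixed-point case.} This case is complete. Evaluation at $x_\sigma$ is a $T$-equivariant surjection, so a weight basis of the fiber lifts to weight sections. The degeneracy locus of $\phi$ is closed, $T$-invariant and misses $x_\sigma$, hence empty.

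\textbf{General case.} The closed-orbit version works for every $\sigma$ and contains the fixed-point case as a special instance. You can therefore drop the product reduction $U_\sigma\cong U_{\sigma'}\times T'$, which you only assert. Two points deserve a sentence each:
\begin{enumerate}
\item Restriction $\Gamma(U_\sigma,\E)\to\Gamma(O_\sigma,\E|_{O_\sigma})$ is surjective because $O_\sigma$ is closed in the affine variety $U_\sigma$. By equivariance it is then surjective on each weight space.
\item Let $T_\sigma$ be the stabilizer of a point of $O_\sigma$, so $O_\sigma\cong T/T_\sigma$. An equivariant bundle on $O_\sigma$ is determined by the $T_\sigma$-representation on one fiber. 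That representation splits into characters of $T_\sigma$, and these lift to $M$ because the character lattice of $T_\sigma$ is $M_\sigma=M/(\sigma^\perp\cap M)$.
\end{enumerate}

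\textbf{What your route adds.} Beyond self-containment, it makes explicit something the citation leaves implicit, and which the paper uses immediately afterwards to define $\u_\E(\sigma)$ and $\h_\E$. The classes $[u_j]\in M_\sigma$ are intrinsic: they are the characters of $T_\sigma$ on the fiber over $O_\sigma$. The lifts $u_j\in M$, by contrast, are only determined modulo $\sigma^\perp\cap M$. This is because $\chi^{u'}$ is a unit on $U_\sigma$ for $u'\in\sigma^\perp\cap M$, so $s_j$ may be replaced by $\chi^{u'}s_j$.
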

The above shows that, for each $\sigma \in \Sigma$, the filtrations $(E^\rho(i))_{i \in \Z}$ satisfy the following compatibility condition: There is a decomposition of $E$ into a direct sum of $1$-dimensional subspaces indexed by a finite multiset $\bm{u}_\E(\sigma) \subset M_\sigma$:
\begin{equation*}
    E = \bigoplus_{[u] \in \bm{u}_\E(\sigma)} E_{[u]},
\end{equation*}
such that for any ray $\rho \in \sigma(1)$ we have:
\begin{equation}  \label{equ-Klyachko-comp-condition}
E^\rho(i) = \sum_{\langle u, v_\rho \rangle \geq i}  E_{[u]}.
\end{equation}

We call a collection of decreasing $\Z$-filtrations $\{(E^\rho(i))_{i \in \Z} \mid \rho \in \Sigma(1) \}$ satisfying the condition \eqref{equ-Klyachko-comp-condition} a \emph{compatible collection of filtrations}. 
The following is Klyachko's classification theorem for toric vector bundles (\cite[Theorem 2.2.1]{Klyachko}). 
\begin{theorem}[Klyachko]   \label{th-Klyachko}
The category of toric vector bundles on $X_\Sigma$ is naturally equivalent to the category of compatible filtrations on finite dimensional $\C$-vector spaces.
\end{theorem}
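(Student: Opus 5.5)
The plan is to write down the functor $\Phi$ from toric vector bundles to compatible collections of filtrations, as already built in this section, and then construct a quasi-inverse $\Psi$ by gluing free modules over the affine charts. Since the construction is local on the fan, everything reduces to the affine case plus a gluing check.

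\emph{Step 1: describe $\Gamma(U_\sigma,\E)$ in terms of the filtrations.} For each $\sigma\in\Sigma$, the maps $\iota$ above identify $\Gamma(U_\sigma,\E)$ with the graded subspace $\bigoplus_{u} E^\sigma_u\otimes\chi^{u}$ of $E\otimes\C[M]$, which is $\Gamma(T,\E)$. Here multiplication by $\chi^{u'}$, $u'\in\sigma^\vee\cap M$, carries $E^\sigma_u$ into $E^\sigma_{u-u'}$. I claim that
\[
E^\sigma_u=\bigcap_{\rho\in\sigma(1)}E^\rho(\langle u,v_\rho\rangle).
\]
To prove this, apply Proposition \ref{prop:toric-affine-equiv-trivial} to $\E|_{U_\sigma}=\bigoplus_j\L_{u_j}$.
\begin{itemize}
\item Each summand contributes the line $E_{[u_j]}$, and its weight sections over $U_\sigma$ are the weights $u$ with $u_j-u$ (up to the sign convention) lying in $\sigma^\vee$.
\item The condition $u_j-u\in\sigma^\vee$ is tested ray by ray on the generators $v_\rho$, $\rho\in\sigma(1)$.
\item Since all the filtrations are diagonal in the single basis $\{E_{[u_j]}\}$, the intersection over rays of the sums $\sum_{\langle u_j,v_\rho\rangle\ge i}E_{[u_j]}$ is the sum over $j$ satisfying all the inequalities at once.
\end{itemize}
This gives the claim. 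It shows that $\Phi(\E)$ determines every $\Gamma(U_\sigma,\E)$ as a $\C[\sigma^\vee\cap M]$-module, with the restriction maps given by the inclusions into $E\otimes\C[M]$. It also shows that $\Phi(\E)$ satisfies \eqref{equ-Klyachko-comp-condition}.

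\emph{Step 2: build the quasi-inverse.} Given a compatible collection $\{E^\rho(i)\}$, define for each $\sigma\in\Sigma$ the module
\[
M_\sigma=\bigoplus_{u\in M}\Big(\bigcap_{\rho\in\sigma(1)}E^\rho(\langle u,v_\rho\rangle)\Big)\chi^u\subseteq E\otimes\C[M].
\]
Using the compatible basis $E=\bigoplus E_{[u]}$ for $\sigma$, the same computation as in Step 1 shows that $M_\sigma\cong\bigoplus_j\C[\sigma^\vee\cap M]\chi^{u_j}$, where each $u_j$ is a lift of its class in $M_\sigma$. Hence $M_\sigma$ is free, and $\widetilde{M_\sigma}$ is a $T$-equivariant vector bundle on $U_\sigma$.

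For a face $\tau\preceq\sigma$, write $U_\tau\subseteq U_\sigma$ as the localization at $\chi^{u'}$ with $u'\in\sigma^\vee$ and $\tau=\sigma\cap u'^\perp$. Then $M_\sigma[\chi^{-u'}]=M_\tau$: inverting $\chi^{u'}$ removes exactly the constraints from the rays in $\sigma(1)\setminus\tau(1)$. Because all the modules sit inside the common space $E\otimes\C[M]$, the cocycle condition is automatic. The $\widetilde{M_\sigma}$ therefore glue to a toric vector bundle $\Psi(\{E^\rho\})$. By construction, the filtrations of $\Psi(\{E^\rho\})$ on the rays are the original $E^\rho(i)$, so $\Phi\Psi\cong\mathrm{id}$. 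Step 1 gives $\Psi\Phi\cong\mathrm{id}$.

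\emph{Step 3: morphisms.} An equivariant morphism $\E\to\mathcal F$ is determined by its restriction to the open orbit. That restriction is a $T$-equivariant map of trivial bundles, so it amounts to a linear map $\phi:E\to F$ of the fibers at $x_0$. The map extends over $U_\sigma$ if and only if $\phi\otimes\mathrm{id}$ sends $\Gamma(U_\sigma,\E)$ into $\Gamma(U_\sigma,\mathcal F)$. By Step 1 this means $\phi(E^\sigma_u)\subseteq F^\sigma_u$ for all $u$, which holds if and only if $\phi(E^\rho(i))\subseteq F^\rho(i)$ for all $\rho$ and $i$; the forward direction is the case of the rays $\rho\in\Sigma(1)$. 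So $\Phi$ is fully faithful.

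\emph{Main obstacle.} I expect the hardest step to be the identification in Step 1 and the freeness of $M_\sigma$ in Step 2, for cones of dimension at least $2$. Both rely on the existence of a single basis that splits all ray filtrations of $\sigma$ simultaneously, which is exactly the content of \eqref{equ-Klyachko-comp-condition}. Without it, the intersections would not be a free module. I would also keep careful track of the sign conventions relating the weights $u$ to the direction of the inclusions $E^\sigma_u\subseteq E^\sigma_{u-u'}$.
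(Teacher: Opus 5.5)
The paper does not prove this theorem; it quotes it from \cite[Theorem 2.2.1]{Klyachko}. Your argument is the standard proof of that result, and it is correct once the sign discussed below is fixed. The key lemma is the intersection formula $E^\sigma_u=\bigcap_{\rho\in\sigma(1)}E^\rho(\langle u,v_\rho\rangle)$. It rebuilds each $\Gamma(U_\sigma,\E)$ inside $\Gamma(T,\E)=E\otimes\C[M]$; the inverse functor then comes from gluing, and full faithfulness is read off on the open orbit. What your write-up makes explicit, and the citation hides, is that the only nontrivial geometric input is Proposition~\ref{prop:toric-affine-equiv-trivial} (itself taken from Klyachko), used once to prove the intersection formula. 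Everything else is linear algebra inside one common ambient module. That is why the cocycle condition is automatic and why full faithfulness reduces to a statement about linear maps $E\to F$.

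The correction is at the place you flagged, and it is not cosmetic. The paper's convention is that multiplication by $\chi^{u'}$, for $u'\in\sigma^\vee\cap M$, sends weight $u$ to weight $u-u'$. So a weight-$u$ section satisfies $s(tx_0)=\chi^{-u}(t)\,t\cdot s(x_0)$. Under the trivialization $\E|_{T}\cong T\times E$ it therefore corresponds to $s(x_0)\otimes\chi^{-u}$, and $\Gamma(U_\sigma,\E)=\bigoplus_u E^\sigma_u\otimes\chi^{-u}$, not $\bigoplus_u E^\sigma_u\otimes\chi^{u}$.

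With $\chi^{u}$ as written, your module in Step 2 is not closed under multiplication by $\C[\sigma^\vee\cap M]$. For example, take a ray $\rho$ and a rank-one $E$ with $E^\rho(i)=E$ exactly when $i\le a$. The element $e\chi^u$ with $\langle u,v_\rho\rangle=a$ lies in your module. But for $u'\in\rho^\vee\cap M$ with $\langle u',v_\rho\rangle=1$, the product $\chi^{u'}\cdot e\chi^{u}$ has $\langle u+u',v_\rho\rangle=a+1$ and $E^\rho(a+1)=0$, so it does not. The failure comes from the filtrations being decreasing.

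Replace $\chi^{u}$ by $\chi^{-u}$ throughout, so the free generators are $E_{[u_j]}\otimes\chi^{-u_j}$. Then freeness, the localization identity $M_\sigma[\chi^{-u'}]=M_\tau$, and the gluing all work as you describe. The localization identity uses $E^\rho(i)=E$ for $i\ll 0$, which the compatibility condition guarantees. Finally, rename your module, since $M_\sigma$ already denotes $M/(\sigma^\perp\cap M)$ in the paper.
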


\subsection{Toric vector bundles as piecewise linear valuations}  \label{subsec-tvbs-val}

In \cite{KM-TVBs-valuations}, toric vector bundles, up to pull back by toric birational morphisms, are classified by valuations on a vector space $E$ and with values in the semifield of integral piecewise linear functions. This is a generalization of classification of toric line bundles by integral piecewise linear functions. In this section, we recall this classification.

Let $\PL(N, \Z)$ denote the set of all piecewise linear functions with respect to some complete fan, on the vector space $N_\R$ which have integer values on the lattice $N$. Moreover, we add a unique ``infinity element'' $\infty$ to $\PL(N, \Z)$ which is greater than any other element. We regard it as the function which assigns value infinity to all points in $N_\R$. The set $\PL(N, \Z)$ is an idempotent semifield with operations of taking minimum and addition of functions. We refer to $\PL(N, \Z)$ as the \emph{piecewise linear semifield}.

There is a natural partial order on $\PL(N, \Z)$ where $h_1\le h_2$ if for all $x$ in $N_{\R}$, $h_1(x)\le h_2(x)$. Following \cite{KM-TVBs-valuations}, we define the notion of a piecewise linear valuation, which is an extension of the notion of a valuation on a vector space.

\begin{definition}[Piecewise linear valuation]  \label{def-PL-val}
Let $E$ be a finite dimensional $\C$-vector space. A map $\V: E \to\PL(N, \Z)$ is a \emph{piecewise linear valuation} if:
\begin{enumerate}
\item[(1)] For any $e \in E$ and any $0 \neq c \in \C$ we have $\V(c e) = \V(e)$.
\item[(2)] For any $e_1, e_2 \in E$ we have
$\V(e_1+e_2) \geq \min(\V(e_1), \V(e_2))$. 
\item[(3)] $\V(e) = \infty$ if and only if $e = 0$.
\end{enumerate}
Unlike valuations with values in a totally ordered abelian group such as $\Z$, the image of $\V$, in general, might be infinite. If the image of $\V$ is finite we call $\V$ a \emph{finite piecewise linear valuation}.
\end{definition}

Let $\Sigma_1$, $\Sigma_2$ be fans with the same support. We say that toric vector bundles $\E_i$ over  $X_{\Sigma_i}$, $i=1, 2$, are \emph{equivalent} if there is a common refinement $\Sigma$ of the $\Sigma_i$ such that $\pi_1^*(\E_1) \cong \pi_2^*(\E_2)$, where $\pi_i: X_\Sigma \to X_{\Sigma_i}$ is the toric blow-up corresponding to refinement of $\Sigma_i$ to $\Sigma$. 

The following is from \cite[Section 3.3 ]{KM-TVBs-valuations}:
\begin{theorem}[Toric vector bundles as piecewise linear valuations]   \label{th-intro-tvb-preval}
The equivalence classes of toric vector bundles $\E$ on a complete toric variety $X_\Sigma$ , with $E$ as the fiber over the distinguished point $x_0$, with respect to the above equivalence (pull back by toric morphisms), are in one-to-one correspondence with finite piecewise linear valuations $\V: E \to \PL(N, \Z)$.
\end{theorem}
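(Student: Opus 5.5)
We use Klyachko's classification (Theorem~\ref{th-Klyachko}) in both directions, and reduce compatibility on each cone to the case of a chain of subspaces, where adapted bases are easy to find. Throughout, $E$ is the fiber over $x_0$.

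\emph{From bundles to valuations.} Let $\E$ be a toric vector bundle on $X_\Sigma$ with Klyachko filtrations $(E^\rho(i))$. Fix a cone $\sigma\in\Sigma$. By Proposition~\ref{prop:toric-affine-equiv-trivial} there is a decomposition $E=\bigoplus_{[u]\in\u_\E(\sigma)}E_{[u]}$ satisfying \eqref{equ-Klyachko-comp-condition}. For $e=\sum e_{[u]}$ and $x\in\sigma$, we set
$$\V(e)(x)=\min\{\langle u,x\rangle : e_{[u]}\neq 0\}.$$
\begin{enumerate}
\item[(a)] \emph{Independence of the splitting.} We show $\V(e)(x)=\max\{\langle u,x\rangle: e\in E^\sigma_u\}$, using that $E^\sigma_u=\bigoplus_{\langle u',\cdot\rangle\geq\langle u,\cdot\rangle \text{ on }\sigma}E_{[u']}$. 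This is intrinsic, so the definitions agree on overlaps $\sigma\cap\sigma'$. Hence $\V(e)$ is a well-defined function on $N_\R$.
\item[(b)] \emph{Piecewise linearity.} On each cone, $\V(e)$ is a minimum of finitely many integral linear functions. It is therefore linear on the cones of a refinement of $\Sigma$, and $\V(e)\in\PL(N,\Z)$.
\item[(c)] \emph{Valuation axioms.} Axioms (1) and (3) are clear. For axiom (2), at each fixed $x$ the function $e\mapsto\V(e)(x)$ is the valuation attached to a filtration. Hence it satisfies $\V(e_1+e_2)(x)\geq\min(\V(e_1)(x),\V(e_2)(x))$ pointwise.
\item[(d)] \emph{Finite image.} On each cone, $\V(e)|_\sigma$ depends only on which summands $E_{[u]}$ support $e$. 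There are finitely many such patterns and finitely many cones, so the image is finite.
\item[(e)] \emph{Invariance under pull-back.} For a refinement $\Sigma'\to\Sigma$, the pull-back bundle has the splitting over $\sigma'\subset\sigma$ induced from the one over $\sigma$. It therefore produces the same $\V$, so equivalent bundles give the same valuation.
\end{enumerate}

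\emph{From valuations to bundles.} Let $\V$ be a finite piecewise linear valuation with image $\{h_1,\dots,h_k\}$. Choose a complete fan $\Sigma$ with two properties:
\begin{itemize}
\item every $h_a$ is linear on each cone;
\item on each cone $\sigma$, the restrictions $h_a|_\sigma$ are totally ordered with respect to $\leq$.
\end{itemize}
Such a fan exists: refine a common linearity fan by the loci $\{h_a=h_b\}$, which are finitely many rational hyperplane sections inside each cone.

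Fix $\sigma\in\Sigma$. Write $\ell_1>\dots>\ell_m$ for the distinct linear functions among the $h_a|_\sigma$, and set
$$F_j=\{e : \V(e)|_\sigma\geq \ell_j\}.$$
By axioms (2) and (3), each $F_j$ is a subspace, and $F_1\subseteq\dots\subseteq F_m=E$ is a chain. Choose a basis $\{e_c\}$ adapted to this chain, so that each $e_c$ lies in $F_{j}\setminus F_{j-1}$ for some $j$. Then $\V(e_c)|_\sigma$ equals the corresponding $\ell_j$ exactly, by total order and finiteness of the image.

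We claim the standard argument for valuations with totally ordered values gives
$$\V\Bigl(\sum_c \lambda_c e_c\Bigr)\Big|_\sigma=\min_{\lambda_c\neq0}\V(e_c)|_\sigma.$$
Grouping the terms by level, let $j^*$ be the worst level with a nonzero coefficient. The group of terms at level $j^*$ is a nonzero element of $F_{j^*}$ lying outside $F_{j^*-1}$, since the $e_c$ at that level represent independent classes in $F_{j^*}/F_{j^*-1}$. All other terms lie in $F_{j^*-1}$, so the whole sum lies outside $F_{j^*-1}$. Since the sum has value at least $\ell_{j^*}$ and its value on $\sigma$ is one of the totally ordered $\ell_j$, its value is exactly $\ell_{j^*}$.

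Setting $u_c$ to be the integral linear form $\V(e_c)|_\sigma$, modulo $\sigma^\perp$, the lines $\C e_c$ give a splitting indexed by $\{u_c\}$. Define $E^\rho(i)=\{e:\V(e)(v_\rho)\geq i\}$. The displayed identity shows these filtrations satisfy \eqref{equ-Klyachko-comp-condition} on every $\sigma$, so Theorem~\ref{th-Klyachko} yields a bundle $\E_\V$ on $X_\Sigma$. Choosing a further refinement changes $\E_\V$ only by pull-back, so its equivalence class is well defined.

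\emph{The two maps are inverse.} The valuation of $\E_\V$ agrees with $\V$ on each cone by the displayed identity. Conversely, applying the second construction to $\V_\E$ on a common refinement reproduces the Klyachko filtrations of the pull-back of $\E$, so we recover the class of $\E$. The main obstacle is the compatibility step in the second direction: a common adapted basis for several filtrations need not exist in general. Refining $\Sigma$ until the finitely many image functions are totally ordered on every cone is exactly what makes it exist.
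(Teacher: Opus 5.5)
The paper does not prove this statement; it quotes it from \cite{KM-TVBs-valuations}. Your argument therefore has to stand on its own. Its architecture is the natural one, and most of it is sound: the min-over-support definition, steps (b)--(d), and the converse direction. In the converse, refining until the finitely many image functions are linear and totally ordered on each cone gives an adapted basis and hence \eqref{equ-Klyachko-comp-condition}. One small point there: ``adapted'' must mean that $\{e_c:\text{level}\le j\}$ is a basis of $F_j$. Merely having each $e_c\in F_j\setminus F_{j-1}$ would allow $\{e_2,e_1+e_2\}$ for $\C e_1\subset\C^2$, and your independence argument needs the stronger property.

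The genuine gap is step (a). The identity $\V(e)(x)=\max\{\langle u,x\rangle : e\in E^\sigma_u\}$ is false at interior points of $\sigma$. The right-hand side is a maximum of linear functions, hence convex on $\sigma$. By contrast, $\V(e)|_\sigma$ is a minimum of linear functions, hence concave. The two agree only where $\V(e)|_\sigma$ is linear, which is exactly what cannot be assumed; it is the reason the values are piecewise linear rather than linear on cones. For a concrete counterexample, take $\sigma=\R_{\ge0}^2$, lines $\C e_1,\C e_2$ with characters $(1,0),(0,1)$, and $e=e_1+e_2$. Then $\V(e)(x)=\min(x_1,x_2)$. By your own description of $E^\sigma_u$, however, $e\in E^\sigma_u$ iff $u\le(0,0)$ coordinatewise, so the maximum is $0$. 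At $x=(1,1)$ you get $1\ne 0$. Your formula is correct only at $x=v_\rho$, and ray values do not determine $\V(e)$ on a cone: $\min(x_1,x_2)$ and $0$ agree on both rays. Consequently, neither independence of the splitting nor agreement on overlaps is established. Steps (c) and (e) and your inverse-map argument both rely on these.

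A correct intrinsic description uses all ray filtrations of the cone simultaneously. Write $x=\sum_{\rho\in\sigma(1)}a_\rho v_\rho$ with $a_\rho\ge 0$. Then for any splitting satisfying \eqref{equ-Klyachko-comp-condition},
\[
\{e:\V(e)(x)\ge t\}=\sum_{(i_\rho)\in\Z^{\sigma(1)}:\ \sum_\rho a_\rho i_\rho\ge t}\ \bigcap_{\rho\in\sigma(1)}E^\rho(i_\rho).
\]
This holds for two reasons. First, sums and intersections of coordinate subspaces of a fixed basis are computed on index sets. Second, integers $i_\rho\le\langle u,v_\rho\rangle$ with $\sum_\rho a_\rho i_\rho\ge t$ exist iff $\langle u,x\rangle\ge t$: use $a_\rho\ge0$ in one direction and take $i_\rho=\langle u,v_\rho\rangle$ in the other. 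The right-hand side depends only on the filtrations, so $\V(e)(x)$ is independent of the splitting. For gluing, note that a splitting compatible on $\sigma$ is also compatible on each face $\tau$. So for $x\in\tau=\sigma\cap\sigma'$, the splittings of $\sigma$ and of $\sigma'$ both compute the same $\tau$-intrinsic subspace. If you replace (a) with this, the rest of your proof goes through as written.
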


\section{Convex chains and toric vector bundles}\label{sec:cc-main}
In this section, we define the convex chain associated to a toric vector bundle as well as a piecewise linear valuation. We show that the Euler characteristic of a toric vector bundle is equal to the sum of values over the lattice points of its associated convex chain.

\subsection{Convex chain associated to a toric vector bundle}  \label{subsec-cc-tvb}
Let $\E$ be a toric vector bundle on a complete toric variety $X_\Sigma$. From Proposition \ref{prop:toric-affine-equiv-trivial}, for any cone $\sigma\in\Sigma$, we have a multiset of characters $\bm{u}_\E(\sigma)=\{ [u_1], \ldots, [u_r]\}$. First, we define the multi-valued support function $\h_\E$ by:
\begin{equation*}
    \h_{\E}(x)=[\langle u_1, x \rangle]+\cdots+[\langle u_r, x \rangle] \in \Z[\R], \quad x \in \sigma.
\end{equation*}
For each $1 \leq i \leq r$, let us define the piecwise linear function $h_i$ as follows: for any cone $\sigma \in \Sigma$ and any $x \in \sigma$, let $h_i(x)$ be the $i$-th smallest number in the multiset $\{ \langle u_1, x \rangle,\ldots, \langle u_r, x \rangle \}$. Thus: $$h_1(x) \leq \cdots \leq h_r(x).$$
Then $\h_\E = [h_1]+\cdots+[h_r] \in \Z[\R]$ which shows that $\h_\E$ is indeed an $r$-valued support function.

\begin{definition}  \label{def:convex-chain-tvb}
We define the convex chain $\alpha_\E$ associated to the toric vector bundle $\E$ to be the convex chain associated to the multi-valued support function $\h_\E$.
\end{definition}

\subsection{Convex chain associated to a piecewise linear valuation}\label{subsec-con-chain-val}
Let $\V: E \to \PL(N, \Z)$ be a piecewise linear valuation (Definition \ref{def-PL-val}). In this section we (canonically) associate to $\V$ a convex chain $\alpha_\V$. If $\V$ is the piecewise linear valuation associated to a toric vector bundle $\E$, the convex chain $\alpha_\V$ coincides with the convex chain $\alpha_\E$ constructed in Section \ref{subsec-cc-tvb}.


Let $x \in N_\R$. Then $\V_x = \V(\cdot)(x)$ is an $\R$-valued valuation on $E$ and hence gives an $\R$-filtration $(E^x(i))_{i \in \R}$, where $E^x(i)=\{e\in E\mid\V(e)(x)\ge i\}$. If $\V_x(E \setminus \{0\})=\{a_1>\cdots>a_k\}$ then we have a flag:
\begin{equation*}
    F_{\V_x,\bullet} = \Big( \{0\} = F_0  \subsetneqq F_1\subsetneqq \cdots \subsetneqq F_k=E\Big),
\end{equation*}
where $F_i=E^x(a_i)$ and we let $a_0=\infty$.
That is, $\V_x$ gives a labeled flag of subspaces in $E$. The value of the multi-valued support function $\h_\V:N_{\R}\to\Z[\R]$ at $x$ is constructed from $\V_x$ in the following way. The value of $\h_\V$ at $x$ is the multiset $\{a_1, \ldots, a_k\}$ where each $a_i$ is repeated $\dim(F_i / F_{i-1})$ times. 

\begin{definition}  \label{def:convex-chain-PL-val}
We let $\alpha_\V$ to be the convex chain associated to the multi-valued support function $\h_\V$. 
\end{definition}

\subsection{Euler characteristic as sum of values of a convex chain over lattice points}   \label{subsec-Euler-char-tvb}
Let $\E$ be a vector bundle on a complete variety $X$. Its \emph{Euler characteristic} $\chi(\E)$ is defined to be 
\begin{equation*}
    \chi(\E)=\sum_{i}(-1)^i\dim H^i(X,\E).
\end{equation*}

Moreover, if $X = X_\Sigma$ is a $T$-toric variety and $\E$ is a toric vector bundle, then we have a $T$-weight space decomposition for each cohomology: 
\begin{equation*}
    H^i(X_\Sigma,\mathcal{E})=\bigoplus_{u\in M}H^i(X_\Sigma,\mathcal{E})_u.
\end{equation*}

For each character $u \in M$, we define the \emph{equivariant Euler characteristic at $u$} as 
\begin{equation*}
    \chi(\mathcal{E})_u=\sum_{i}(-1)^i\dim H^i(X_\Sigma,\mathcal{E})_u.
\end{equation*}
We refer to the function $u \mapsto \chi(\E)_u$ as the \emph{equivariant Euler characteristic of $\E$}.


Let $D$ be an invariant ample divisor on $X_\Sigma$ and $\L=\O_{X_\Sigma}(D)$. Then the convex chain $\alpha_\L$ coincides with the indicator function of the polytope $P_D$. In fact, from Theorem \ref{thm-dem-van}, for any $u \in M$:
    \begin{equation*}
        \chi(\mathcal{L})_u=\sum_{i}(-1)^i\dim H^i(X_\Sigma,\mathcal{L})_u=h^0(X_\Sigma,\mathcal{L})_u = \mathbbm{1}_{P_D}(u) = \alpha_\L(u).
    \end{equation*}

The following proposition extends the above to non-ample divisors. This appears in the work of Khovanskii-Pukhlikov (\cite[p. 794]{Khovanskii-Pukhlikov-2}, \cite{Khovanskii-toroidal}) and is one of their motivations for introducing the notion of a convex chain. We give a sketch of a proof here.

\begin{prop}  \label{th-Euler-char-lb}
    For any toric line bundle  $\mathcal{L}$, and any character $u \in M$, we have: $\chi(\mathcal{L})_u=\alpha_\mathcal{L}(u)$.
\end{prop}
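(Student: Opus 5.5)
The plan is to reduce to the ample case by writing an arbitrary toric line bundle as a difference of ample ones, and to use additivity of both sides of the identity. Let $\mathcal{L}=\O_{X_\Sigma}(D)$ with $D$ a $T$-invariant Cartier divisor on the projective toric variety $X_\Sigma$, and let $h_D$ be its (integral, piecewise linear) support function. For a line bundle the multi-valued support function $\h_\L$ is single-valued, so $\h_\L = [h_D]$. Since $X_\Sigma$ is projective, there is an ample invariant divisor $A$ with $D+A$ ample. Then $h_D = h_{D+A} - h_A$ is a difference of convex support functions, namely those of the lattice polytopes $P=P_{D+A}$ and $Q=P_A$. By the ring isomorphism between convex chains and multi-valued support functions, $\alpha_\L = \mathbbm{1}_P * (\mathbbm{1}_Q)^{-1}$, the convex chain of the virtual polytope $P\oplus(-Q)$. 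Using Minkowski inversion (Theorem \ref{th-Minkowski-inversion}) this becomes
\[
\alpha_\L=\sum_{\Delta\in\Gamma(\symmetry Q)}(-1)^{\dim\Delta}\mathbbm{1}_{P\oplus\Delta}.
\]

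On the geometric side, I will realize the same alternating sum. The main idea is a Koszul-type resolution built from the faces of $Q$. I would pass to a toric resolution and refinement $\Sigma'$ on which everything is smooth. The pull back does not change equivariant cohomology, since $R\pi_*\O=\O$ for toric birational maps, and it does not change the convex chain, since the support functions are unchanged. On $\Sigma'$, the face $\Delta$ of $\symmetry Q$ corresponds to an orbit closure $V_\Delta$, and the restriction of $-A$ to $V_\Delta$ has polytope $\Delta$. I would then check that
\[
\sum_\Delta(-1)^{\dim\Delta}[\O_{V_\Delta}(D+A)\otimes\cdots]
\]
equals the class of $\O(D)$ in equivariant $K$-theory. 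Each term is a nef line bundle on a toric subvariety, whose $\chi_u$ is $\mathbbm{1}_{P\oplus\Delta}(u)$ by the ample case (Theorem \ref{thm-dem-van}, using the vanishing for nef bundles too). Additivity of $\chi_u$ in exact sequences then gives the result.

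A cleaner alternative I would try first avoids the resolution entirely and uses Čech cohomology weight by weight. For fixed $u$, the $u$-weight part of the Čech complex of $\O(D)$ on the cover $\{U_\sigma\}$ has the cell $\sigma$ contributing $\C$ exactly when $\langle u,x\rangle\ge -h_D(x)$ on $\sigma$ (with the paper's sign conventions), and $0$ otherwise. So $\chi(\L)_u$ is a topological Euler characteristic: up to sign and complement, it is the Euler characteristic of the closed region $\{x\in N_\R : \langle u,x\rangle + h_D(x) \ge 0\}$, computed cone by cone, or equivalently via compactly supported Euler characteristic on the sphere. One then needs the Khovanskii-Pukhlikov fact that the value at $u$ of the convex chain associated to a piecewise linear function $h$ equals exactly this Euler characteristic of $\{x : h(x)\ge\langle u,x\rangle\}$ (suitably signed). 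Both sides are additive in the Euler-characteristic sense, and the fact holds for convex $h$: there the region is the normal-cone complement, which is contractible or empty according as $u\in P$ or not.

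The main obstacle is this last identification for non-convex $h$. The convex chain is defined algebraically through the group inverse, not pointwise, so I would prove that the map $h\mapsto\big(u\mapsto\chi_{top}\{h\ge u\}\big)$ is a ring homomorphism. It is enough to check multiplicativity against convex $h$, and the point is that the convolution of indicator functions corresponds to adding support functions, which matches Euler characteristics of level regions by a Fubini/Euler-integration argument. With that in hand, the proposition follows from the convex case.
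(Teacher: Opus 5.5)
You present two routes; the Koszul-type route fails as written, while the \v{C}ech route is correct and differs from the paper's argument.

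\textbf{The Koszul-type route.} Drop it. A $T$-equivariant line bundle on a proper orbit closure $V(\tau)$ has all its cohomology weights in a single coset of $\tau^\perp\cap M$, because the stabilizer subtorus $T_\tau$ acts on every fiber by the same character. So its $\chi_u$ is supported on an affine subspace of dimension $\dim V(\tau)<n$. But $P\oplus\Delta$ is full-dimensional, so no term supported on $V_\Delta$ can have $\chi_u=\mathbbm{1}_{P\oplus\Delta}(u)$. There is a second problem. The restriction of $\O(-A)$ to the orbit closure of a face $F$ of $Q$ corresponds to the virtual polytope $-F$, whose chain is $(-1)^{\dim F}\mathbbm{1}_{(\symmetry F)^\circ}$, not to the polytope $\symmetry F$. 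Finally, asking $\chi_u$ to turn tensor products into Minkowski products is, for line bundles, exactly the content of the proposition.

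\textbf{The \v{C}ech route is sound.} Set $\Phi(h)(u)=(-1)^n\chi_c\{x\in N_\R : h(x)\ge\dotpro{u}{x}\}$, where $\chi_c$ is the compactly supported Euler characteristic.

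The convex case: for $h=h_P$, the open region $\{h<\dotpro{u}{\cdot}\}$ is empty when $u\in P$ and a nonempty open pointed convex cone when $u\notin P$. Your sentence has this backwards. Hence $\Phi(h_P)=\mathbbm{1}_P$.

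Your Fubini step works as you hope. Compute $\chi_c$ of $S=\{(q,x)\in Q\times N_\R : h(x)+\dotpro{q}{x}\ge\dotpro{u}{x}\}$ by fibering over $q$ and over $x$. Each $x$-fiber is a compact convex set, nonempty iff $h(x)+h_Q(x)\ge\dotpro{u}{x}$, so $\Phi(h)*\mathbbm{1}_Q=\Phi(h+h_Q)$. With $h=h_P-h_Q$ this gives $\Phi(h)*\mathbbm{1}_Q=\mathbbm{1}_P$, hence $\Phi(h)=\mathbbm{1}_P*(\mathbbm{1}_Q)^{-1}=\alpha_h$; associativity is again Fubini.

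On the sheaf side, as in the paper, you must pass from the \v{C}ech complex indexed by tuples of maximal cones to the cone-indexed sum $\sum(-1)^{\codim\tau}$ over cones $\tau$ with $\Gamma(U_\tau,\L)_u\neq 0$. This uses the M\"obius function of the Eulerian face poset of a complete fan, or the nerve argument behind Cox--Little--Schenck, Theorem 9.1.3. Additivity of $\chi_c$ over the relatively open cones $\tau^\circ$ then identifies this sum with $\Phi(h)(u)$. The key fact is that a linear function taking both signs on $\tau^\circ$ cuts out a piece with $\chi_c=0$.

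\textbf{Comparison with the paper.} The paper instead quotes Khovanskii--Pukhlikov's Brianchon--Gram decomposition $\alpha_\L=\sum_\sigma(-1)^{\codim\sigma}\mathbbm{1}_{C_\sigma}$ and matches it cell by cell with the \v{C}ech complex. Evaluating your level-set Euler characteristic cone by cone produces exactly that alternating sum. So your Euler-integration lemma is in effect a self-contained proof of the decomposition the paper cites, and your ``main obstacle'' is precisely the cited input.

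The paper's version is shorter given the citation. Yours gives a closed formula for $\alpha_h$ that extends linearly to multi-valued support functions. It also shows that $\chi(\L)_u$ depends only on the topology of a level set. And it never uses an ample $A$ on $X_\Sigma$, since any decomposition $h=h_P-h_Q$ suffices, so it also covers complete non-projective $X_\Sigma$.
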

\begin{proof}[Sketch of proof]
One knows that the toric line bundle $\L$ on $X_\Sigma$ corresponds to a virtual polytope whose normal fan is refined by $\Sigma$. The convex chain $\alpha_\L$ of this virtual polytope, has a decomposition as an alternating sum of indicator functions of polyhedral cones (\cite[\S 4, Proposition 2]{Khovanskii-Pukhlikov-1}) as follows:
\begin{equation} \label{equ-alt-sum-virtual-polytope}
\alpha_\L = \sum_{\sigma \in \Sigma} (-1)^{\codim(\sigma)} \mathbbm{1}_{C_\sigma},
\end{equation}
where $C_\sigma$ is a shifted copy of the dual cone $\sigma^\vee$.
The above alternating sum is an extension of the classic Brianchon-Gram theorem to virtual polytopes. The Brianchon-Gram theorem states that the indicator function of a convex polytope is an alternating sum of the indicator functions of tangent cones at all the faces of the polytope (see \cite{BHS}).

On the other hand, the equivariant Euler characteristic $\chi(\L)_u$ can be computed using Cech cohomology with respect to the open cover by the affine toric charts $U_\sigma$, where the $\sigma$ are maximal cones in the fan. One verifies that this exactly coincides with 
\eqref{equ-alt-sum-virtual-polytope}.
This finishes the proof.
%
\end{proof}

The following lemma tells us that the cohomologies of a toric vector bundle do not change under refining the fan. Note that this is not the case for arbitrary (non-toric) varieties. The proof below was pointed to us by Amin Gholampour and Klaus Altmann.
\begin{lemma}  \label{lem-pull-back-inv}
The cohomologies and hence the equivariant Euler characteristic of a toric vector bundle do not change under pull-back by a toric birational morphism.     
\end{lemma}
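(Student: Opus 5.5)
Let $\pi: X_{\Sigma'} \to X_\Sigma$ be the toric birational morphism induced by a refinement $\Sigma'$ of $\Sigma$ (so $|\Sigma'| = |\Sigma|$), and let $\E$ be a toric vector bundle on $X_\Sigma$. The plan is to compare cohomology through the Leray spectral sequence. Since $\pi^*\E$ is locally free, the projection formula gives
$$R^i\pi_*(\pi^*\E) \cong \E \otimes R^i\pi_*\O_{X_{\Sigma'}}.$$
So it is enough to prove the classical fact that $\pi_*\O_{X_{\Sigma'}} = \O_{X_\Sigma}$ and $R^i\pi_*\O_{X_{\Sigma'}} = 0$ for $i > 0$. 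Once we have this, the Leray spectral sequence $H^p(X_\Sigma, R^q\pi_*\pi^*\E) \Rightarrow H^{p+q}(X_{\Sigma'}, \pi^*\E)$ degenerates and yields
$$H^i(X_{\Sigma'}, \pi^*\E) \cong H^i(X_\Sigma, \E)$$
for all $i$.

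The key step is the vanishing of the higher direct images of the structure sheaf. This is local on $X_\Sigma$, so we may restrict to an affine chart $U_\sigma$ with $\sigma \in \Sigma$. Over that chart, $\pi^{-1}(U_\sigma)$ is the toric variety of the subfan $\Sigma'_\sigma$ of cones of $\Sigma'$ contained in $\sigma$, and that subfan has convex support $\sigma$. We then compute $H^i(X_{\Sigma'_\sigma}, \O)$ one weight at a time using the Čech complex for the cover by the affine charts $U_\tau$, $\tau$ maximal in $\Sigma'_\sigma$. For a character $u$, the weight-$u$ part of this complex computes the reduced cohomology of the subset $\{x \in |\Sigma'_\sigma| : \langle u, x\rangle < 0\}$ of the support, in the spirit of \cite[Section 9.1]{Cox-Little-Schenck}. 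Because $\sigma$ is convex, that subset is either empty or convex, hence contractible. Therefore $H^i = 0$ for $i > 0$, and $H^0 = \C[\sigma^\vee \cap M]$. This shows $\pi_*\O = \O$ and $R^i\pi_*\O = 0$ for $i>0$. The alternative is to use that toric varieties have rational singularities, which gives the same conclusion.

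The main obstacle is keeping track of the $T$-action, since we want the equivariant Euler characteristic and not only the ordinary one. All the maps above are $T$-equivariant: the projection formula, the Leray spectral sequence, and the isomorphism $\pi_*\O \cong \O$ (which is equivariant because $\pi$ is). So the isomorphism $H^i(X_{\Sigma'}, \pi^*\E) \cong H^i(X_\Sigma, \E)$ respects the weight decompositions. Taking weight-$u$ parts then gives $\chi(\pi^*\E)_u = \chi(\E)_u$ for every $u \in M$.

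A more hands-on alternative avoids the spectral sequence. Use Proposition~\ref{prop:toric-affine-equiv-trivial} to trivialize $\E$ equivariantly on each $U_\sigma$. Then $\pi^*\E|_{\pi^{-1}(U_\sigma)}$ is a sum of equivariant line bundles pulled back from $U_\sigma$, and the same convexity argument applied to each summand gives the vanishing of the higher direct images directly.
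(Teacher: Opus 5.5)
Your proposal is correct and follows essentially the paper's second argument: the projection formula together with $R\pi_*\O_{X_{\Sigma'}} = \O_{X_\Sigma}$ for a refinement. The paper cites this vanishing from \cite[Theorem 9.2.5]{Cox-Little-Schenck} instead of deriving it through the weight-by-weight Čech/convexity computation you give, and it also sketches a separate route via Altmann's Weil decorations; your explicit check that the isomorphisms respect the $T$-weights fills in a point the paper leaves implicit.
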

\begin{proof}
Firstly, the results in \cite{Altmann} show that the $T$-weight spaces in cohomology groups $H^i(X_\Sigma, \E)$ can be computed from the combinatorial data of their so-called Weil decorations (see \cite[Theorem 1.5]{Altmann}). One also observes that the data of a Weil decoration does not change under refining the fan. It follows that the weight spaces in the cohomology groups do not change under refining the fan. 

Secondly, one can give a more general argument which does not use the toric setup. Let $f:X’\to X$ be a proper birational morphism such that $Rf_* \O_{X’} = \O_X$ (this is the case when $f$ is a toric birational morphism between toric varieties, see \cite[Theorem 9.2.5]{Cox-Little-Schenck}). Let $\E$ be a vector bundle on $X$. We claim that $H^i(X, \E) = H^i(X', f^*\E)$, for all $i$.
In fact let $p:X \to \Spec(\C)$ and $q:X' \to \Spec(\C)$ be the structure morphisms.
Then by the projection formula and the assumption $Rf_* \O_{X’} = \O_X$, we have $Rq_* f^*\E= Rp_*(Rf_* f^*\E)= Rp_* \E$.
Now the left-hand side computes the cohomologies $H^i(X', f^*\E)$ and the right hand side computes the cohomologies $H^i(X,\E)$.

\end{proof}

The following is the main result of this section. It generalizes Proposition \ref{th-Euler-char-lb} to toric vector bundles.
\begin{theorem}\label{thm-cc-eu}
Let $\E$ be a toric vector bundle on a complete toric variety $X_\Sigma$ with corresponding convex chain $\alpha_\E$. The equivariant Euler characteristic of $\E$ is given by
$$\chi(\E)_u = \alpha_\E(u), ~~~~~~~\forall u \in M.$$
In particular,
$$\chi(\E) = S(\alpha_\E) = \sum_{u\in M}\alpha_\E(u).$$    
\end{theorem}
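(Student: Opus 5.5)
We reduce the statement to the line bundle case, Proposition \ref{th-Euler-char-lb}, by splitting $\E$ in equivariant $K$-theory after refining the fan.

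\textbf{Step 1: refine the fan.} Let $h_1\le\cdots\le h_r$ be the ordered piecewise linear functions from Section \ref{subsec-cc-tvb}. We refine $\Sigma$ to a fan $\Sigma'$ on whose cones each $h_i$ is linear. One way is to take the common refinement of $\Sigma$ with all hyperplanes $\langle u_j-u_k,x\rangle=0$ inside each cone. We may further take $\Sigma'$ smooth and projective, or at least complete. Let $\pi:X_{\Sigma'}\to X_\Sigma$ be the toric birational morphism. By Lemma \ref{lem-pull-back-inv}, $\chi(\E)_u=\chi(\pi^*\E)_u$ for all $u$. The multisets $\bm{u}_{\pi^*\E}(\sigma')$ are obtained by restricting those of $\E$, so $\h_{\pi^*\E}=\h_\E$ and $\alpha_{\pi^*\E}=\alpha_\E$. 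Hence we may assume every $h_i$ is linear on each cone of $\Sigma$. Each $h_i$ then defines a toric line bundle $\L_i$, in Klyachko's description a rank one filtration with jump at $h_i(v_\rho)$. Since the convex chain of a multi-valued support function is additive, $\alpha_\E=\sum_i\alpha_{\L_i}$.

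\textbf{Step 2: split $\E$ in the Grothendieck group.} By Proposition \ref{th-Euler-char-lb}, $\chi(\L_i)_u=\alpha_{\L_i}(u)$. It therefore suffices to show $\chi(\E)_u=\sum_i\chi(\L_i)_u$. Both sides are Čech computations with respect to the cover $\{U_\sigma\}$, and this is the route I will take. For each intersection $U_{\sigma_0}\cap\cdots\cap U_{\sigma_p}=U_\tau$ we have $\Gamma(U_\tau,\E)_u\cong E^\tau_u$. Using the equivariant splitting of Proposition \ref{prop:toric-affine-equiv-trivial},
\[\dim E^\tau_u=\#\{j:\ u_j-u\in\tau^\vee\}=\#\{i:\ h_i-u\ge0 \text{ on } \tau\}=\sum_i\dim\Gamma(U_\tau,\L_i)_u .\]
The middle equality holds because, on $\tau$, the multiset $\{\langle u_j,x\rangle\}$ coincides pointwise with $\{h_i(x)\}$ and every $h_i$ is linear there. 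So $u_j$ matches some $h_i|_\tau$ bijectively.

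\textbf{Step 3: Euler characteristics.} The Euler characteristic of the Čech complex of weight $u$ is the alternating sum of these dimensions. These dimensions are finite: completeness of $X_\Sigma$ makes the weight spaces of the cohomology finite dimensional, and I will truncate the complexes or argue via the finite-dimensional weight pieces. Hence
\[\chi(\E)_u=\sum_p(-1)^p\sum_{\sigma_0<\cdots<\sigma_p}\dim\Gamma(U_{\sigma_0\cdots\sigma_p},\E)_u=\sum_i\chi(\L_i)_u=\sum_i\alpha_{\L_i}(u)=\alpha_\E(u).\]
Summing over $u$ gives $\chi(\E)=\sum_u\alpha_\E(u)$; the sum is finite since $\alpha_\E$ has bounded support.

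\textbf{Main obstacle.} The delicate point is finiteness. The individual Čech terms $\Gamma(U_\tau,\E)_u$ may be infinite-dimensional when $\tau$ is not full-dimensional, because $\tau^\vee$ then contains a line. Comparing Euler characteristics termwise therefore needs care. To handle this, I will use the dimension identity to build an actual comparison rather than just counting. On the refined fan, the filtrations $E^\rho(i)$ are split by a single decomposition $E=\bigoplus E_{[u]}$ only locally, so there is no global splitting. Instead, I will filter $\E$ by the subsheaves where $\V\ge h_i$. These should be toric subbundles with successive quotients $\L_i$, at least after further refinement. Their existence follows from the PL valuation picture, Theorem \ref{th-intro-tvb-preval}. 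Additivity of $\chi(\cdot)_u$ on short exact sequences then gives the result without termwise dimension counts. If this filtration fails to be by subbundles, the fallback is the Čech argument above, with the infinite terms handled by noting that they cancel identically between $\E$ and $\bigoplus\L_i$, since the dimension identity holds degreewise.
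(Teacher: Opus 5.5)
Your Steps 1--3 give a correct proof, and they differ from the paper's at the one substantive step. Both arguments refine $\Sigma$ so that the $h_i$ are linear on cones, use Lemma~\ref{lem-pull-back-inv} and $\alpha_\E=\sum_i\alpha_{\L_i}$, and reduce to Proposition~\ref{th-Euler-char-lb}. What differs is the proof that $\chi(\E')_u=\sum_i\chi(\L_i)_u$.

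The paper notes that $\E'$ and $\L_1\oplus\cdots\oplus\L_r$ have the same characters at every cone, hence the same equivariant Chern classes. It then concludes that their equivariant Euler characteristics agree. This tacitly uses equivariant Riemann--Roch (or localization in equivariant $K$-theory) on a smooth refinement.

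You instead prove the termwise identity $\dim\Gamma(U_\tau,\E')_u=\sum_i\dim\Gamma(U_\tau,\L_i)_u$ for every cone $\tau$ of $\Sigma'$. So the weight-$u$ \v{C}ech complexes of $\E'$ and $\bigoplus_i\L_i$ have equal alternating sums of dimensions. This is elementary, needs neither smoothness nor Chern classes, and is the same mechanism as the paper's sketch for line bundles. The paper's route is shorter given the machinery, and it makes clear why only the fixed-point data matter.

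The matching of the $u_j$ with the pieces $h_i|_\tau$ deserves one line of proof. The identity $\prod_j(t-\langle u_j,x\rangle)=\prod_i(t-h_i(x))$ for $x\in\tau$ extends to an identity of polynomials on $\R\times\Span\tau$, and unique factorization then matches the linear factors.

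Your ``main obstacle'' is not real, and the remedies you propose for it should be dropped. Every weight space $\Gamma(U_\tau,\E)_u$ is finite dimensional. Evaluation at $x_0$ embeds it in $E$; this is the map $\iota$ of Section~\ref{subsec-TVBs}, which you use yourself when writing $\Gamma(U_\tau,\E)_u\cong E^\tau_u\subseteq E$. So its dimension is at most $r$. What is infinite is the set of weights occurring in $\Gamma(U_\tau,\E)$ (for every $\tau$, maximal or not), never a single weight space.

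Since the cover is finite and $T$-stable, the weight-$u$ part of the \v{C}ech complex is a bounded complex of finite-dimensional spaces computing $H^\bullet(X_{\Sigma'},\E')_u$. Step 3 therefore holds as written. ``Cancelling infinite terms'' would not have been a valid argument in any case.

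The filtration by $\{e:\V(e)\ge h_i\}$ also does not work. One always has $\{\V\ge h_1\}=E$. For $\mathcal{T}_{\P^2}$, any $e$ with $\V(e)\ge h_2$ lies in each of the three distinct lines $E^{\rho_k}(1)$, so $\{\V\ge h_2\}=0$ and no rank-one subbundle appears. A filtration of $\E'$ by toric subbundles with quotients $\L_1,\dots,\L_r$ can be obtained from a generic full flag in $E$, but your \v{C}ech count makes it unnecessary.
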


\begin{proof}
    One knows that there is a refinement $\Sigma'$ of $\Sigma$ such that, if $f: X_{\Sigma'}\to X_\Sigma$ is the corresponding birational morphism, then the pull-back  $\E'=f^*\E$ has the same equivariant Chern classes (regarded as piecewise polynomial functions with respect to $\Sigma'$) and hence the same equivariant Euler characteristic, as a direct sum of (toric) line bundles $\mathcal{L}_1\oplus \mathcal{L}_2\cdots \oplus \mathcal{L}_r$ on $X_{\Sigma'}$.
    We have
    \begin{equation*}
        \chi(\E)_u=\chi(\E')_u=\chi(\bigoplus_{i=1}^r\L_i)_u=\sum_{i=1}^r\chi(\L_i)_u=\sum_{i=1}^r \alpha_{\mathcal{L}_i}(u)=\alpha_\E(u).
    \end{equation*}
\end{proof}


\section{Examples} \label{sec:example}
We give two examples here. For more details and computer codes see \cite{Huang-thesis}.

\begin{example}[Tangent bundle of $\P^2$]
    Let $\Sigma$ be the fan of the toric variety $\P^2$. The primitive vectors of rays are the vectors $\{\bm{v}_1, \bm{v}_2, \bm{v}_3\}$ where $\bm{v}_1$ and $\bm{v}_2$ are the standard basis and $\bm{v}_3=-\bm{v}_1-\bm{v}_2$. Let $\rho_i$ be the ray in $\Sigma$ generated by $\bm{v}_i$. 
    Consider the tangent bundle $\mathcal{T}_{\P^2}$ as a toric vector bundle. 
    We identify the fiber $E$ of the tangent bundle over the identity of the torus with $N\otimes_{\Z}\C={\C}^2$, $\bm{v}_1$ and $\bm{v}_2$ are also the standard basis of $E$.
    For each maximal cone $\sigma_i$ in the fan $\Sigma$, there is a basis of $E$ and a multiset of characters $\u(\sigma_i)$. And we have $\u(\sigma_1)=\{-\bm{w}_1, \bm{w}_2-\bm{w}_1\}$, $\u(\sigma_2)=\{\bm{w}_1-\bm{w}_2, -\bm{w}_2\}$ and $\u(\sigma_3)=\{\bm{w}_1, \bm{w}_2\}$.

Let ${\bf h} = (h_1 \leq h_2)$ be the multi-valued support function of $\mathcal{T}_{\P^2}$.
For $x=(x_1,x_2)\in N_{\R}$, we have
    \begin{equation*}
        h_1(x)=\begin{cases}
            \min_{\bm{w}\in u(\sigma_1)}\dotpro{\bm{w}}{x},&x\in\sigma_1\\
            \min_{\bm{w}\in u(\sigma_2)}\dotpro{\bm{w}}{x},&x\in\sigma_2\\
            \min_{\bm{w}\in u(\sigma_3)}\dotpro{\bm{w}}{x},&x\in\sigma_3\\
        \end{cases}=\begin{cases}
            -x_1,&x\in\sigma_1,\ x_2\ge0\\
            -x_1+x_2,&x\in\sigma_1,\ x_2<0\\
            x_1-x_2,&x\in\sigma_2,\ x_1\le0\\
            -x_2,&x\in\sigma_2,\ x_1>0\\
            x_2,&x\in\sigma_3,\ x_1\ge x_2\\
            x_1,&x\in\sigma_3,\ x_1 < x_2
        \end{cases}
    \end{equation*}
        \begin{equation*}
        h_2(x)=\begin{cases}
            \max_{\bm{w}\in u(\sigma_1)}\dotpro{\bm{w}}{x},&x\in\sigma_1\\
            \max_{\bm{w}\in u(\sigma_2)}\dotpro{\bm{w}}{x},&x\in\sigma_2\\
            \max_{\bm{w}\in u(\sigma_3)}\dotpro{\bm{w}}{x},&x\in\sigma_3\\
        \end{cases}=\begin{cases}
            -x_1+x_2,&x\in\sigma_1,\ x_2\ge0\\
            -x_1,&x\in\sigma_1,\ x_2<0\\
            -x_2,&x\in\sigma_2,\ x_1\le0\\
            x_1-x_2,&x\in\sigma_2,\ x_1>0\\
            x_1,&x\in\sigma_3,\ x_1\ge x_2\\
            x_2,&x\in\sigma_3,\ x_1 < x_2.
        \end{cases}
    \end{equation*}
    Consider
    \begin{equation*}
        h_s(x)=h_1(x)+h_2(x)=\begin{cases}
            -2x_1+x_2,&x\in\sigma_1\\
            x_1-2x_2,&x\in\sigma_2\\
            x_1+x_2,&x\in\sigma_3.
        \end{cases}
    \end{equation*}
    One sees that $h_2$ and $h_s$ are convex functions.
Let $P_2$ and $P_s$ be the polytopes whose support functions are $h_2$ and $h_s$ respectively. The polytopes $P_2$ and $P_s$ are depicted in Figure \ref{fig-polytope}.
    
    \begin{figure}[h]
        \centering
        \subfloat[\centering $P_s$]{{\includegraphics[width=6cm]{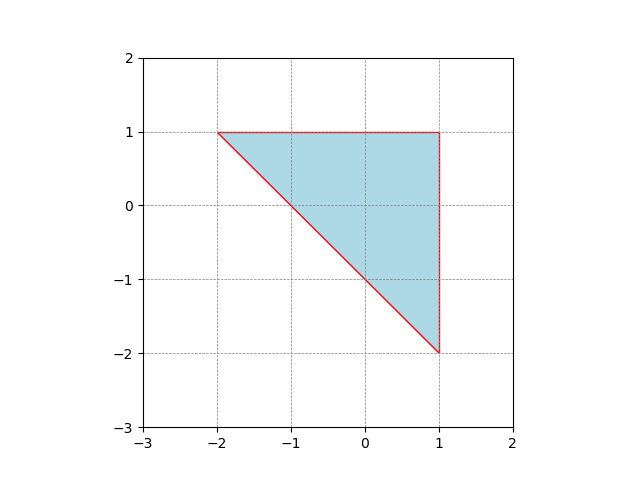} }}
        \qquad
        \subfloat[\centering $P_2$]{{\includegraphics[width=6cm]{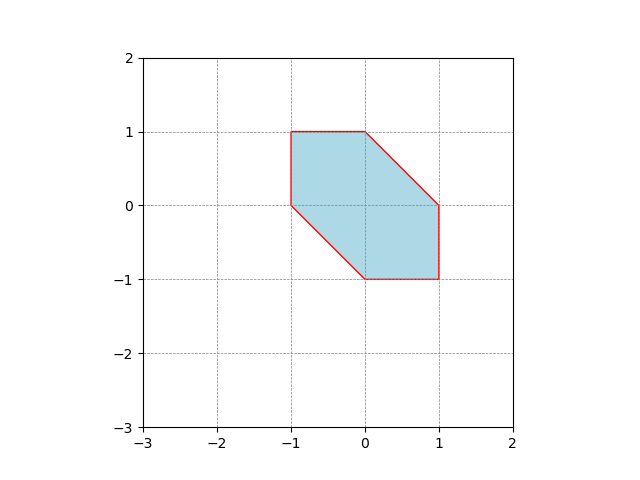} }}
        \caption{Polytopes from $h$} \label{fig-polytope}
    \end{figure}

    Since $h_1$ is not a convex function it only corresponds to a virtual polytope $P_1$. Since $h_1 = h_s-h_2$ and Theorem \ref{thm-vir-supp}, the virtual polytope $P_1$ equals $P_s\oplus(-P_2)$. Let $\alpha_1$ be the convex chain of this virtual polytope. Then (using Theorem \ref{th-Minkowski-inversion}) we have:
    \begin{equation*}
        \begin{split}
            \alpha_1 &=\mathbbm{1}_{P_s\oplus(-P_2)}=\mathbbm{1}_{P_s}*\mathbbm{1}_{P_2}^{-1}\\
            &=\mathbbm{1}_{P_s}*(-1)^{\dim P_2}\mathbbm{1}_{\interior(\symmetry P_2)}\\
            &=\mathbbm{1}_{P_s}*\sum_{\Delta\in\Gamma(\symmetry P_2)}(-1)^{\dim \Delta}\mathbbm{1}_\Delta\\
            &=\sum_{\Delta\in\Gamma(\symmetry P_2)}(-1)^{\dim \Delta}\mathbbm{1}_{P_s\oplus\Delta}.\\
        \end{split}
    \end{equation*}

    Let $\alpha = \alpha_1 +\mathbbm{1}_{P_2}$ be the convex chain associated to the toric vector bundle $\mathcal{T}_{\P^2}$. 
    The following figure gives the values of $\alpha$ on lattice points, where star at the origin represents value 2 and dot represents value 1.
    \begin{figure}[ht]
        \centering
        \includegraphics[scale=0.6]{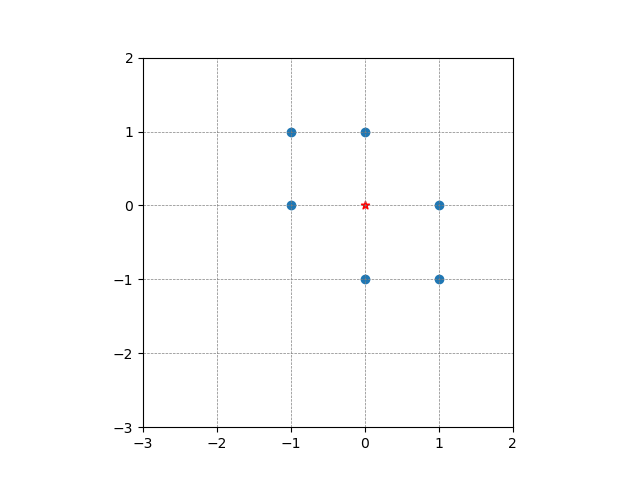}
        \caption{Values of the convex chain $\alpha$ on lattice points}
    \end{figure}
    This corresponds to
$$\dim H^0(\P^2,\mathcal{T}_{\P^2})_{(0, 0)} = \alpha((0, 0)) = 2,$$
and            
$$\dim H^0(\P^2,\mathcal{T}_{\P^2})_{u} = \alpha(u) = 1,$$
for $u=(-1, 1), (-1, 0), (0, 1), (0, -1), (1, 0), (1, -1)$.
\end{example}

\begin{example}
We consider a toric vector bundle $\mathcal{P}$ of rank 2 on the first Hirzebruch surface $X_\Sigma$ (see \cite[Example 4.4]{DJS}). The fan $\Sigma$ is given by the ray generators $\bm{v}_1=(1,0),\bm{v}_2=(0,1),\bm{v}_3=(-1,1),\bm{v}_4=(0,-1)$. Let $\rho_i$ be the ray in $\Sigma$ generated by $\bm{v}_i$ and $\sigma_{i,j}=\Span(\bm{v}_i, \bm{v}_j)$. The set $\{\bm{v}_1,\bm{v}_2\}$ is also a basis of the distinguished fiber $E$ of $\mathcal{P}$. The filtrations defining $\mathcal{P}$ are
    \begin{equation*}
        E^{\rho_1}(i)=\begin{cases}
            E,\ &i\le-2\\
            \Span(\bm{v}_1), \ &-2<i\le4\\
            0, &i>4
        \end{cases}
    \end{equation*}
    \begin{equation*}
        E^{\rho_2}(i)=\begin{cases}
            E,\ &i\le2\\
            \Span(\bm{v}_1), \ &2<i\le3\\
            0, &i>3
        \end{cases}
    \end{equation*}
    \begin{equation*}
        E^{\rho_3}(i)=\begin{cases}
            E,\ &i\le0\\
            \Span(\bm{v}_2), \ &0<i\le5\\
            0, &i>5
        \end{cases}
    \end{equation*}
    \begin{equation*}
        E^{\rho_4}(i)=\begin{cases}
            E,\ &i\le-1\\
            \Span(\bm{v}_1+\bm{v}_2), \ &-1<i\le3\\
            0, &i>3.
        \end{cases}
    \end{equation*}
The corresponding multiset of characters is given by 
    \begin{equation*}
        \begin{split}
            \u(\sigma_{1,2})&=\{-2\bm{w}_1+2\bm{w}_2, 4\bm{w}_1+3\bm{w}_2\}\\
            \u(\sigma_{2,3})&=\{-3\bm{w}_1+2\bm{w}_2, 3\bm{w}_1+3\bm{w}_2\}\\
            \u(\sigma_{3,4})&=\{-4\bm{w}_1+\bm{w}_2, -3\bm{w}_1-3\bm{w}_2\}\\
            \u(\sigma_{4,1})&=\{-2\bm{w}_1-3\bm{w}_2, 4\bm{w}_1+\bm{w}_2\}.
        \end{split}
    \end{equation*}

Let ${\bf h} = (h_1 \leq h_2)$ be the multi-valued support function corresponding to $\mathcal{P}$. For $x=(x_1,x_2)\in N_{\R}$, we have
    \begin{equation*}
        h_1(x)=\begin{cases}
            \min_{\bm{w}\in u(\sigma_{1,2})}\dotpro{\bm{w}}{x},&x\in\sigma_{1,2}\\
            \min_{\bm{w}\in u(\sigma_{2,3})}\dotpro{\bm{w}}{x},&x\in\sigma_{2,3}\\
            \min_{\bm{w}\in u(\sigma_{3,4})}\dotpro{\bm{w}}{x},&x\in\sigma_{3,4}\\
            \min_{\bm{w}\in u(\sigma_{4,1})}\dotpro{\bm{w}}{x},&x\in\sigma_{4,1}
        \end{cases}=\begin{cases}
            -2x_1+2x_2,&x\in\sigma_{1,2}\\
            -3x_1+2x_2,&x\in\sigma_{2,3},\ 6x_1+x_2\ge0\\
            3x_1+3x_2,&x\in\sigma_{2,3},\ 6x_1+x_2<0\\
            -3x_1-3x_2,&x\in\sigma_{3,4},\ x_1\le 4x_2\\
            -4x_1+x_2,&x\in\sigma_{3,4},\ x_1> 4x_2\\
            4x_1+x_2,&x\in\sigma_{4,1},\ 3x_1+2x_2\le 0\\
            -2x_1-3x_2,&x\in\sigma_{4,1},\ 3x_1+2x_2< 0
        \end{cases}
    \end{equation*}

    \begin{equation*}
        h_2(x)=\begin{cases}
            \max_{\bm{w}\in u(\sigma_{1,2})}\dotpro{\bm{w}}{x},&x\in\sigma_{1,2}\\
            \max_{\bm{w}\in u(\sigma_{2,3})}\dotpro{\bm{w}}{x},&x\in\sigma_{2,3}\\
            \max_{\bm{w}\in u(\sigma_{3,4})}\dotpro{\bm{w}}{x},&x\in\sigma_{3,4}\\
            \max_{\bm{w}\in u(\sigma_{4,1})}\dotpro{\bm{w}}{x},&x\in\sigma_{4,1}
        \end{cases}=\begin{cases}
            4x_1+3x_2,&x\in\sigma_{1,2}\\
            3x_1+3x_2,&x\in\sigma_{2,3},\ 6x_1+x_2\ge0\\
            -3x_1+2x_2,&x\in\sigma_{2,3},\ 6x_1+x_2<0\\
            -4x_1+x_2,&x\in\sigma_{3,4},\ x_1\le 4x_2\\
            -3x_1-3x_2,&x\in\sigma_{3,4},\ x_1> 4x_2\\
            -2x_1-3x_2,&x\in\sigma_{4,1},\ 3x_1+2x_2\le 0\\
            4x_1+x_2,&x\in\sigma_{4,1},\ 3x_1+2x_2< 0.
        \end{cases}
    \end{equation*}
    Also, we have 
    \begin{equation*}
        h_s(x)=h_1(x)+h_2(x)=
        \begin{cases}
            2x_1+5x_2,&x\in\sigma_{1,2}\\
            5x_2,&x\in\sigma_{2,3}\\
            -7x_1-2x_2,&x\in\sigma_{3,4}\\
            2x_1-2x_2,&x\in\sigma_{4,1}.
        \end{cases}
    \end{equation*}
One observes that $h_s$ and $h_2$ are convex. Figure \ref{fig-polytopes2} shows the corresponding polytopes $P_s$ and $P_2$.
    \begin{figure}[h]
        \centering
        \subfloat[\centering $P_s$]{{\includegraphics[width=6cm]{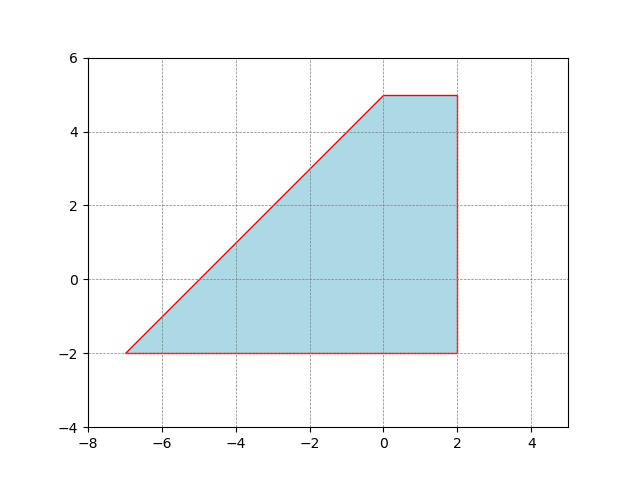} }}
        \qquad
        \subfloat[\centering $P_2$]{{\includegraphics[width=6cm]{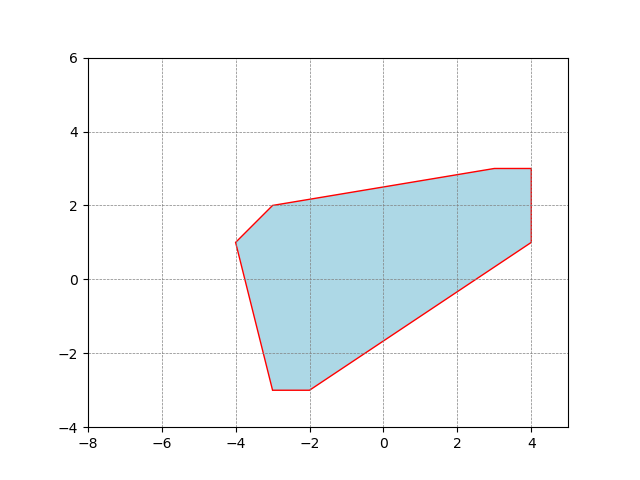} }}
        \caption{Polytopes from $h$} \label{fig-polytopes2}
    \end{figure}

Let $\alpha_1$ be the convex chain of the virtual polytope $P_1$ corresponding to the piecewise linear function $h_1$. Then $\alpha=\alpha_1+\mathbbm{1}_{P_2}$ is the convex chain associated to the toric vector bundle $\mathcal{P}$.
     The following figure gives the values of $\alpha$ on lattice points, where star at $(-1, 0)$ represents the value $-1$ and dot represents the value 1.
    \begin{figure}[ht]  
        \centering
        \includegraphics[scale=0.6]{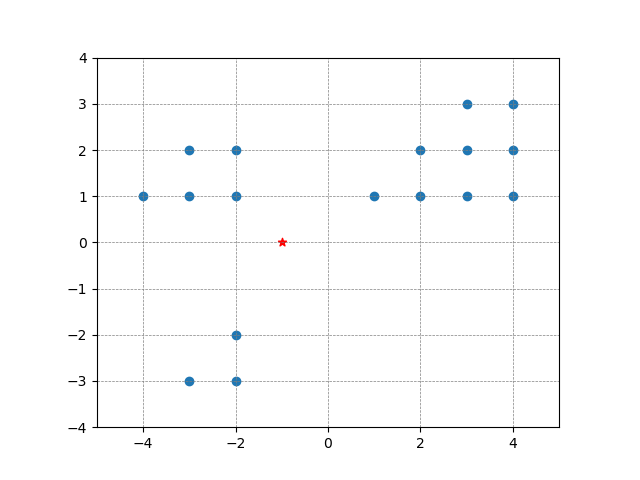}
        \caption{Values of the convex chain $\alpha$ on lattice points} \label{fig-cc}
    \end{figure}
    
    This corresponds to
$$            -\dim H^1(X,\mathcal{P})_{(-1, 0)} = -\alpha((-1, 0)) = -1,$$
and
$$\dim H^0(X,\mathcal{P})_{u} = \alpha(u) = 1,$$
    for the other lattice points $u$ marked by dots in Figure \ref{fig-cc}.
\end{example}

\end{document}